\newtheorem{theo}{Theorem}
\newtheorem*{mainthm}{Main Theorem}
\newtheorem{lemm}[theo]{Lemma}
\newtheorem{coro}[theo]{Corollary}
\newtheorem{prop}[theo]{Proposition}
\newtheorem{quest}[]{Question}
\newtheorem{conj}[theo]{Conjecture}
\theoremstyle{remark}
\newtheorem{rema}[theo]{Remark}
\newtheorem{exam}[theo]{Example}
\newcommand{\KZ}{{K}_0({\rm Var}_k)}
\newcommand{\C}{\ensuremath{\mathbb{C}}}
\newcommand{\Z}{\ensuremath{\mathbb{Z}}}
\newcommand{\cpo}{\mathbb{P}^1_\C}
\newcommand{\cpd}{\mathbb{P}^2_\C}
\newcommand{\cpt}{\mathbb{P}^3_\C}
\newcommand{\Exc}{\text{Exc}}
\begin{document}

\title{Birational self-maps and piecewise algebraic geometry}
\author{St\'ephane Lamy}
\author{Julien Sebag}
\address{St\'ephane Lamy, Institut de Math\'ematiques de Toulouse, Universit\'e Paul Sabatier, 118 route de Narbonne, 31062 Toulouse Cedex 9, France}
\email{slamy@math.univ-toulouse.fr}
\address{Julien Sebag, Irmar--Umr 6625 du CNRS, 263 Avenue du General Leclerc, CS 74205, 35042 Rennes Cedex, France}
\email{Julien.Sebag@univ-rennes1.fr} 
\subjclass[2000]{14E07}

\begin{abstract} 
Let $X$ be a smooth projective complex variety, of dimension 3, whose Hodge numbers $h^{3,0}(X), h^{1,0}(X)$ both vanish. Let $f\colon X\dasharrow X$ be a birational map that induces an isomorphism  $U\cong V$ on (dense) open subvarieties $U,V$ of $X$. Then we show that the complex varieties $(X\setminus U)_{\mathrm{red}},(X\setminus V)_{\mathrm{red}}$ are piecewise isomorphic.
\end{abstract} 

\maketitle

\section{Introduction}
\label{intro}

In \cite{Gromov}, \textsc{Misha Gromov} asks the following question:

\begin{quest}
\label{Gromov}
Let $X,Y$ be algebraic varieties that admit embeddings to a third one, say $X\hookrightarrow Z$ and $Y\hookrightarrow Z$, such that the complements $Z\setminus X$ and $Z\setminus Y$ are biregular isomorphic. How far are $X$ and $Y$ from being birationally equivalent?\\
\end{quest}

Let $k$ be an arbitrary field. Let $\Z[{\rm Var}_k]$ be the free abelian group generated by the 
isomorphism classes of $k$-schemes of finite type. One defines the \emph{Grothendieck ring of varieties over $k$}, denoted by $\KZ$, as the quotient of 
$\Z[\mathrm{Var}_k]$ by the following ``scissor'' relations: whenever $X$ 
is a $k$-scheme of finite type, and $F$ is a closed subscheme of $X$, 
we impose that 
$$
[X]=[F]+[X\setminus F].
$$
The multiplication in $\KZ$ is defined by 
$$[X]\cdot[X']:=[X\times_k X'].$$
for any pair of $k$-schemes of finite type $X,X'$. 

Independently of \cite{Gromov}\footnote{The existence of a link between Question \ref{Gromov} and the structure of $\KZ$ is already mentioned in \cite[3G$'''$]{Gromov}.}, \textsc{Michael Larsen} and \textsc{Valery Lunts} ask, in \cite[Question 1.2]{Larsen-Lunts}, the following question in order to understand the ``geometric nature'' of  the Grothendieck ring of varieties:

\begin{quest}
\label{Larsen-Lunts}
Let $X,Y$ be reduced separated schemes of finite type over $k = \C$ such that $[X]=[Y]$ in $\KZ$. Is it true that $X,Y$ are piecewise isomorphic?

\end{quest}

Recall that two $k$-schemes of finite type $X,Y$ are called \emph{piecewise isomorphic}, if one can find a finite set $I$, a partition $(V_i)_{i\in I}$ of $X$ in (locally closed) subschemes of $X$, and  a partition $(W_i)_{i\in I}$ of $Y$ in (locally closed) subschemes of $Y$, such that, for any $i$, $i\in I$, there exists an isomorphism of $k$-schemes $f_i\colon (V_i)_{\mathrm{red}}\rightarrow (W_i)_{\mathrm{red}}$. Note that this property only depends on the reduced scheme structures of $X$ and $Y$. Up to now, this question is open in general (see \cite{Liu-Sebag}, \cite{Sebag-PAMS} for positive elements of answer in this direction, or \cite[Theorem 13.1]{HK} that uses a different point of view).\\

The fact that, without any condition on $k$, a positive answer to Question \ref{Larsen-Lunts} implies a precise answer to Question \ref{Gromov} is easy to see. We adopt the notations of Question \ref{Gromov}. Because of the relations lying in $\KZ$, we deduce that
$[Z\setminus X]=[Z\setminus Y]$, and that $[X]=[Y]$ in $\KZ$. If Question \ref{Larsen-Lunts} has a positive answer, we conclude that $X,Y$ are piecewise isomorphic. Because of the definition of a piecewise isomorphism, this remark proves in particular the existence of a bijection $\sigma$ between the set of the irreducible components of maximal dimension of respectively $X$ and $Y$, such that $(X_i)_{\mathrm{red}}$ and $(\sigma(X_i))_{\mathrm{red}}$ are birationally equivalent, for any irreducible component $X_i$ of $X$.\\

Besides, note that Question \ref{Larsen-Lunts} holds true in particular for any pair $X,Y$ of separated $k$-schemes of finite type of dimension at most 1, if $k$ is supposed to be algebraically closed of characteristic 0 (see \cite[Proposition 6]{Liu-Sebag}). \\

In the present article, we investigate the following conjectural reformulation of Question \ref{Gromov} based on Question \ref{Larsen-Lunts} (see also \cite[Remark 13]{Liu-Sebag}, or \cite{Sebag-PAMS}).

\begin{conj}
\label{la conjecture}
Let $k$ be an algebraically closed field of characteristic zero. Let $X$ be a separated scheme of finite type over $k$. Let $f\colon X\dasharrow X$ be a birational map that induces an isomorphism of $k$-schemes $U\cong V$ on dense open subsets $U,V$ of $X$. Then the $k$-schemes $(X\setminus U)_{\mathrm{red}},(X\setminus V)_{\mathrm{red}}$ are piecewise isomorphic.
\end{conj}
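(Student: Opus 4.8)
The first move is to pass to the Grothendieck ring. Since $f$ restricts to an isomorphism $U\cong V$, we have $[U]=[V]$ in $\KZC$, and the scissor relations give
$$[(X\setminus U)_{\mathrm{red}}]=[X]-[U]=[X]-[V]=[(X\setminus V)_{\mathrm{red}}].$$
Thus the two schemes whose piecewise isomorphism we want to establish already have the same class in $\KZC$, and the statement becomes a special case of Question \ref{Larsen-Lunts}. Since that question is open in general, the whole point must be to exploit the fact that $X\setminus U$ and $X\setminus V$ are not arbitrary classes with equal image in $\KZC$, but arise geometrically as the two complementary loci of a single birational self-map.

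Second, I would realize the self-map by a common resolution. By resolution of indeterminacy one finds a smooth projective $W$ together with proper birational morphisms $p,q\colon W\to X$ with $f=q\circ p^{-1}$, where $p$ is an isomorphism over $U$ and $q$ an isomorphism over $V$. On $p^{-1}(U)$ the composite $q\circ p^{-1}$ is exactly the isomorphism $U\cong V$, so $p^{-1}(U)=q^{-1}(V)$; writing $E:=W\setminus p^{-1}(U)$ one gets $E=p^{-1}(X\setminus U)=q^{-1}(X\setminus V)$, with $p$ and $q$ inducing proper surjections $E\to X\setminus U$ and $E\to X\setminus V$. The goal now is to produce a \emph{common} stratification $E=\bigsqcup_i Z_i$ into locally closed pieces on which simultaneously $p$ and $q$ are isomorphisms onto their images, with $\{p(Z_i)\}$ tiling $(X\setminus U)_{\mathrm{red}}$ and $\{q(Z_i)\}$ tiling $(X\setminus V)_{\mathrm{red}}$; the induced $p(Z_i)\cong Z_i\cong q(Z_i)$ would then give the desired piecewise isomorphism.

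Third, I would try to build this common stratification by descending induction on dimension. Over the open locus where both $p$ and $q$ are local isomorphisms one matches pieces directly; the failure locus, where a fibre of $p$ or $q$ is positive-dimensional, sits over a subscheme of $X\setminus U$ (resp. $X\setminus V$) of strictly smaller dimension, and one analyzes it by restricting to these contracted fibres. Because $U,V$ are dense, $X\setminus U$ and $X\setminus V$ already have dimension $<\dim X$; so for $\dim X=2$ the complements are at most curves and the induction bottoms out at the dimension-$\le 1$ case of Question \ref{Larsen-Lunts}, which holds by \cite[Proposition 6]{Liu-Sebag}. For $\dim X=3$ the complements are surfaces, and this is exactly where the difficulty concentrates; it is also where I expect the extra hypotheses of the present paper, namely $h^{3,0}(X)=h^{1,0}(X)=0$, to become indispensable.

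The main obstacle is therefore the surface case: building a common stratification of the $2$-dimensional exceptional loci, equivalently the dimension-$2$ instance of Question \ref{Larsen-Lunts} for the classes that occur. Any divisor contracted by a birational morphism of smooth threefolds is uniruled, so the $2$-dimensional parts of $X\setminus U$ and $X\setminus V$ are unions of uniruled surfaces; the vanishing of $h^{1,0}$ and $h^{3,0}$ is meant to rule out the appearance of irrational (holomorphic $1$-form or $3$-form carrying) behaviour and to force these surfaces, together with the bases of their rulings, to be rational in a controlled way. For such surfaces one can hope to realize the common stratification explicitly along the ruling, reducing each matching to fibrewise data over curves and hence again to the known dimension-$\le 1$ case. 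Making this precise --- showing that the exceptional surfaces of $p$ and $q$ admit compatible rational stratifications with matching images, and that the Hodge vanishings genuinely supply the needed rationality --- is the heart of the argument and the step I expect to be hardest.
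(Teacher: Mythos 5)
You are attempting to prove Conjecture \ref{la conjecture}, which the paper itself does not prove: it is stated as an open conjecture, and the paper only establishes the special case of Theorem \ref{le theoreme} (smooth irreducible projective threefolds over $\C$ with $h^{3,0}(X)=h^{1,0}(X)=0$). Your proposal is likewise not a proof but a strategy with a self-acknowledged gap at the decisive step, so the question is whether your strategy could close that gap in the threefold case. It cannot, as written, because it rests on a false expectation about what the Hodge-vanishing hypotheses provide. You write that the vanishing of $h^{1,0}$ and $h^{3,0}$ should ``force these surfaces, together with the bases of their rulings, to be rational in a controlled way.'' This is wrong: the contracted surfaces need not be rational even for $X=\cpt$, where all Hodge numbers in question vanish. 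The paper's Example \ref{exple:cubic} exhibits a birational self-map $g$ of $\cpt$ whose exceptional locus contains the cone over a smooth elliptic curve, i.e.\ a component birational to $C\times\cpo$ with $g(C)=1$; the paper stresses that ``transformations contracting only rational surfaces are far from being the rule.'' So your plan to reduce everything fibrewise along rulings to the dimension-$\le 1$ case collapses exactly on the non-rational ruled components, and nothing in your setup controls which base curves $C$ appear on the two sides.

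What the hypotheses $h^{3,0}(X)=h^{1,0}(X)=0$ actually buy is that the intermediate Jacobian $J(X)=J_2(X)$ is a principally polarized abelian variety, and this is the mechanism entirely missing from your proposal. The paper's route is: first reduce (as you do) to showing that the dimension-$2$ irreducible components of $(X\setminus U)_{\mathrm{red}}$ and $(X\setminus V)_{\mathrm{red}}$ match up to birational equivalence, the lower-dimensional leftovers being handled by Proposition \ref{cas des courbes}; note all such components are ruled, i.e.\ birational to $C\times_\C\cpo$ (Lemma \ref{unireglage}); then, instead of a single common resolution, take a weak factorization of $f$ and apply Proposition \ref{jacobienne}, which yields an isomorphism of principally polarized abelian varieties between $J(X)$ times the Jacobians of the non-rational blow-up centers on the $b_i$-side and the same expression for the $c_i$-side. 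Unique factorization into indecomposable factors (Theorem \ref{thm:factor}) and Torelli (Theorem \ref{thm:torelli}), combined with the counting of blow-ups coming from the $\KZC$ blow-up formula (Corollary \ref{piece weak}, Corollary \ref{corollaire jacobienne2}) and a careful matching of strict transforms, then force the number of components birational to $C\times_\C\cpo$ in $(X\setminus U)_{\mathrm{red}}$ to equal that in $(X\setminus V)_{\mathrm{red}}$ for every curve $C$ of genus $\ge 1$, with Lemma \ref{dimension} settling the rational ones. Without an ingredient playing the role of the intermediate Jacobian, your common-stratification induction has no way to rule out, say, $f$ contracting a surface ruled over an elliptic curve while $f^{-1}$ contracts only rational surfaces.
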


Note that, by a classical argument of birational geometry, if $X$ is a smooth projective scheme over $k$, of arbitrary dimension $d$, having no rational curves, then such a map $f$ is, in fact, an isomorphism of $k$-schemes. Thus Conjecture \ref{la conjecture} holds true in that case, but remains open, and particularly relevant, for separated $k$-schemes of finite type having an infinite number of rational curves. (The case where the separated $k$-scheme of finite type $X$ has only finitely many rational curves can be solved positively for example by \cite[Theorem 5]{Liu-Sebag}.)\\

Precisely, we prove the following theorem, which is the main result of this article (see Theorem \ref{le theoreme}, and Remark \ref{Gen thm} for a slightly more general statement).

\begin{mainthm}
Let $X$ be a smooth irreducible projective $\mathbb{C}$-variety, of dimension 3, whose Hodge numbers $h^{3,0}(X), h^{1,0}(X)$ both vanish. Let $f\colon X\dasharrow X$ be a birational map that induces a $\mathbb{C}$-isomorphism $U\cong V$ on dense open subvarieties $U,V$ of $X$. Then the $\mathbb{C}$-varieties $(X\setminus U)_{\mathrm{red}}$, $(X\setminus V)_{\mathrm{red}}$ are piecewise isomorphic.
\end{mainthm}

The proof of such a statement is based on the following ingredients.

\begin{enumerate}[$a$)]
\item The use of the Weak Factorization Theorem for birational maps (see \cite{Wlo}) and its combinatoric;
\item The use of intermediate Jacobians to control the weak factorizations of $f$ (see \cite{Clemens-Griffiths});
\item The positive answer to Larsen--Lunts' question in the case of curves (see \cite[Proposition 6]{Liu-Sebag}).
\end{enumerate}

This article is mostly self-contained, for the convenience of the reader. 
Sections \ref{PAG} and \ref{JI} give an overview of some tools of birational, piecewise algebraic and complex geometry that we use in the proof of Theorem \ref{le theoreme}. Besides, we establish in these sections some technical lemmas, sometimes not explicitly described in the literature, that we need, in a crucial way, in the proof of Theorem \ref{le theoreme} (e.g., see Lemma \ref{dimension}, Corollary \ref{piece weak}, Propositions \ref{cas des courbes}, \ref{jacobienne}, Corollary \ref{corollaire jacobienne2}). 
Then Section \ref{MAIN} is devoted to our main theorem and its proof (see Theorem \ref{le theoreme}).
In Sections \ref{EXPLE} and \ref{COM} we give respectively some explicit examples and some further comments. \\

The authors would thank Qing Liu and Fran\c{c}ois Loeser for their comments on a first version of this article.

\section{Conventions}

 Let $k$ be a field. A \emph{curve} (resp. \emph{surface}) is a separated $k$-scheme of finite type purely of dimension 1 (resp. 2). More generally, a $k$-\emph{variety} is a separated $k$-scheme of finite type. If $x$ is a point in a $k$-variety, we define its dimension as the transcendence degree over $k$ of the residue field $\kappa(x)$ of $X$ at $x$. \\

Two reduced $k$-varieties $X,Y$ are \emph{birationally equivalent} if there exist a dense open subscheme of $X$ and a dense open subscheme of $Y$ that are isomorphic as $k$-schemes. We say that two reduced $k$-varieties $X,Y$ are  \emph{stably birational} if there exist integers $m,n\in\mathbf{N}$ such that $X\times_k \mathbb{P}^m_k, Y\times_k\mathbb{P}^n_k$ are birationally equivalent. If $k$ is algebraically closed, we say that an integral $k$-variety $X$ is \emph{rational}, if it is birationally equivalent to $\mathbb{P}^{\mathrm{dim}(X)}_k$. More generally, if $k$ is algebraically closed, a reduced $k$-variety is said rational if its irreducible components are all rational. 
An integral $k$-variety $X$, of dimension $d$, is said \emph{uniruled} if there exists a $k$-variety $Y$ of dimension $d-1$, and a dominant rational map $Y\times_k \mathbb{P}^1_k\dasharrow X$. If $k$ 
is an algebraically closed field of characteristic zero, and if $d=2$, a smooth projective $k$-variety $X$ is uniruled if and only if it is ruled. \\

If $X,Y$ are two piecewise isomorphic $k$-varieties, the datum of the family of the $k$-isomorphisms $(f_i)_{i\in I}$ (given, by definition, between the reduced locally closed strata of $X,Y$) is called a \emph{piecewise isomorphism}. \\

If $k$ is an algebraically closed field, we say that a $k$-variety is an abelian variety if $X$ is a smooth irreducible projective abelian group scheme over $k$.\\

If $X$ is a $\mathbb{C}$-variety, we denote by $X^{\mathrm{an}}:=X(\mathbb{C})$ the \emph{analytic space associated to} $X$.\\

In $\KZ$, we denote by $\mathbb{L}$ the class of the affine line, and by $[X]$ the class of the $k$-scheme of finite type $X$. Let $A$ be a ring. A ring morphism $\KZ \rightarrow A$ is called a \emph{motivic measure}.

\section{Birational and piecewise algebraic geometry}
\label{PAG}

In this section, we review some tools of birational and piecewise algebraic geometry that we use in the proof of Theorem \ref{le theoreme}. We also establish some useful preliminary lemmas.

\subsection{Birational maps and the Weak Factorization Theorem}

In this paragraph, we review some well-known results on birational maps such as the Weak Factorization Theorem established in \cite[Theorem 0.1.1]{Wlo} and we assume that $k$ is an algebraically closed field of characteristic zero. 
Let $X,Y$ be two $k$-varieties. 
A rational map $f\colon X\dasharrow Y$ that induces an isomorphism of $k$-schemes $U\cong V$ from a dense open subvariety of $X$ onto a dense open subvariety of $Y$ is called a \emph{birational map}. 
In particular, there exists a unique maximal open subvariety $\mathrm{Dom}(f)$ of $X$, such that $f_{\vert_{\mathrm{Dom}(f)}}\colon \mathrm{Dom}(f)\rightarrow Y$ is a morphism of $k$-schemes. 
If $x\in X$, we say that a birational map $f\colon X\dasharrow Y$ is a \emph{local isomorphism} (for the Zariski topology) at $x$ if there exists an open subvariety $U$ of $\mathrm{Dom}(f)$ that contains $x$ such that the restriction of $f_{\vert_{\mathrm{Dom}(f)}}$ to $U$ is an open immersion of $k$-schemes. Note that, if $f\colon X\dasharrow Y$ is a local isomorphism at $x$, then $x\in\mathrm{Dom}(f)$. Remark also that the set of points of $X$, at which a birational map $f\colon X\dasharrow Y$ is \emph{not} a local isomorphism, is a closed subset of $X$, that we call \emph{exceptional set} of $f$. We denote it by $\mathrm{Exc}(f)$.\\

The following lemmas are recalled for convenience for the reader. Lemma \ref{unireglage} should be kept in mind for the proof of Theorem \ref{le theoreme}.

\begin{lemm}
\label{equivalence top}
Let $X,Y$ be two integral $k$-varieties. Let $f\colon X\dasharrow Y$ be a birational map. Let $x\in \mathrm{Dom}(f)$ be a point. Let us set $y:=f(x)$. The following assertions are equivalent.
\begin{enumerate}[a\emph{)}]
\item The map $f$ is a local isomorphism at $x$;
\item The morphism $f_{\vert_{\mathrm{Dom}(f)}}$ induces an isomorphism of $k$-algebras $\mathcal{O}_{Y,y}\rightarrow \mathcal{O}_{X,x}$;
\item The morphism $f_{\vert_{\mathrm{Dom}(f)}}$ is \'etale at $x$.
\item The morphism $f_{\vert_{\mathrm{Dom}(f)}}$ induces an isomorphism of $k$-algebras $\widehat{\mathcal{O}_{Y,y}}\rightarrow \widehat{\mathcal{O}_{X,x}}$.
\end{enumerate}
\end{lemm}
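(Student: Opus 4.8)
The plan is to fix the morphism $g := f_{\vert_{\mathrm{Dom}(f)}}\colon \mathrm{Dom}(f)\to Y$ and to use that, $X$ and $Y$ being integral and $f$ birational, the induced map on function fields is an isomorphism: identifying the common function field $K$ of $X$ and $Y$, the local homomorphism $g^\sharp\colon \mathcal{O}_{Y,y}\to \mathcal{O}_{X,x}$ becomes simply the inclusion of two local subrings of $K$. I would then organize the argument as the cycle $(a)\Rightarrow(b)\Rightarrow(c)\Rightarrow(a)$, completed by $(b)\Rightarrow(d)$ and $(d)\Rightarrow(c)$. Here $(a)\Rightarrow(b)$ and $(b)\Rightarrow(d)$ are immediate, since an open immersion induces an isomorphism on local rings and completion is functorial. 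The implication $(b)\Rightarrow(c)$ is also formal: an isomorphism of local rings is flat, has vanishing module of differentials and identical residue fields, hence is unramified; as $g$ is locally of finite type, $g$ is \'etale at $x$.

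The heart of the matter is $(c)\Rightarrow(a)$, and this is where I would use birationality in an essential way. I choose an open neighbourhood $W$ of $x$ on which $g$ is \'etale; being open in the integral scheme $X$, $W$ is integral with function field $K$, and $g_{\vert W}\colon W\to Y$ remains birational. Since $g_{\vert W}$ is unramified and separated, its diagonal $\Delta\colon W\to W\times_Y W$ is at once an open and a closed immersion, so that $\Delta(W)$ is a connected component of $W\times_Y W$. I would then compute the fibre of the first projection $W\times_Y W\to W$ over the generic point $\eta_W$: because $g_{\vert W}$ is birational, this fibre is the single reduced point $\Spec(K)$, which lies in $\Delta(W)$. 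Consequently the complementary clopen subscheme $(W\times_Y W)\setminus \Delta(W)$ maps to $W$ by an \'etale, hence open, morphism whose image avoids $\eta_W$; in an irreducible scheme a nonempty open set contains the generic point, so this image, and hence the complement itself, is empty. Therefore $\Delta$ is an isomorphism, i.e. $g_{\vert W}$ is a monomorphism; being moreover flat and locally of finite presentation, it is an open immersion, and $f$ is a local isomorphism at $x$.

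Finally I would obtain $(d)\Rightarrow(c)$ from the standard criterion that, for a local homomorphism essentially of finite type between Noetherian local rings, $g$ is \'etale at $x$ if and only if the induced map $\widehat{\mathcal{O}_{Y,y}}\to \widehat{\mathcal{O}_{X,x}}$ is \'etale; an isomorphism being \'etale, this yields $(c)$. The main obstacle is squarely the implication $(c)\Rightarrow(a)$: \'etaleness alone permits nontrivial residue extensions and several sheets, and only the hypothesis that $f$ is birational --- forcing the generic fibre of $g$ to be a single reduced point --- excludes these. I expect the diagonal argument above to be the cleanest way to convert ``generic degree one'' into the monomorphism property; the alternative route, passing to completions and showing that the finite \'etale extension $\widehat{\mathcal{O}_{Y,y}}\to\widehat{\mathcal{O}_{X,x}}$ has degree $[\kappa(x):\kappa(y)]=1$, looks more painful, because the completions of $\mathcal{O}_{X,x}$ and $\mathcal{O}_{Y,y}$ need not be integral domains.
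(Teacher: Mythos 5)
Your proof is correct, but it is organized around a different pivot than the paper's. The paper's printed argument is essentially local-algebraic: dominance makes $f_x\colon \mathcal{O}_{Y,y}\to \mathcal{O}_{X,x}$ injective; if $f$ is \'etale at $x$ then $f_x$ is flat, hence faithfully flat (being a local homomorphism), and since birationality places both local rings inside the common function field $K$ with $\mathrm{Frac}(\mathcal{O}_{Y,y})=K$, faithful flatness forces $f_x$ to be an isomorphism --- that is $(c)\Rightarrow(b)$ in two lines, with no mention of the diagonal; then $(c)\Leftrightarrow(d)$ is read off from the residue fields, exactly as you do. What the paper never writes out is the passage back to $(a)$: an earlier draft (commented out in the source) invoked Grothendieck's form of Zariski's Main Theorem, and this leg is simply absent from the printed proof. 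Your argument goes the other way around: $(b)\Rightarrow(c)$ is the trivial leg, and the real work is a direct geometric proof of $(c)\Rightarrow(a)$, where unramifiedness plus separatedness makes the diagonal of $g_{\vert W}$ clopen, birationality makes the generic fibre of $p_1$ a single reduced point lying on the diagonal, irreducibility of $W$ kills the complementary clopen set, and the theorem that a flat monomorphism locally of finite presentation is an open immersion concludes. Each route has its merits: the paper's faithful-flatness trick is shorter and isolates exactly where birationality enters, but as printed it only establishes the equivalence of $(b)$, $(c)$, $(d)$; your version is longer but closes the cycle at $(a)$, avoiding ZMT and any normality hypothesis --- a genuine point, since quasi-finite, separated and birational alone does not imply local isomorphism (the normalization of a nodal curve with one preimage of the node removed maps to the curve quasi-finitely, separatedly and birationally, yet is not a local isomorphism at the remaining preimage; this is why Lemma \ref{unireglage} assumes $Y$ normal). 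One small wording caveat: in $(d)\Rightarrow(c)$, rather than quoting ``\'etale if and only if the completed map is \'etale'' (which requires tracking the residue extension), it is cleaner to argue that an isomorphism on completions gives flatness of $f_x$ by descent along the faithfully flat map $\mathcal{O}_{X,x}\to\widehat{\mathcal{O}_{X,x}}$ and unramifiedness by Nakayama; but the direction you actually use is the standard one and the step is sound.
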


\begin{proof}
Since $f_{\vert_{\mathrm{Dom}(f)}}$ is dominant, the induced morphism of local $k$-algebras $f_x\colon \mathcal{O}_{Y,y}\rightarrow \mathcal{O}_{X,x}$
is injective. 

Assume that $f_{\vert_{\mathrm{Dom}(f)}}$ is \'etale at $x$. Thus, the local morphism $f_x$ is flat, hence faithfully flat. Since $f$ is birational, we deduce that $f_x$ is an isomorphism. It shows that  point $c$) implies point $b$). Besides, $f_{\vert_{\mathrm{Dom}(f)}}$ then induces an isomorphism of $k$-algebras $\kappa(y)\rightarrow \kappa(x)$. So point $c$) is equivalent to point $d$).

\if false
Hence, the induced morphism
there  morphism $f_{\vert_{\mathrm{Dom}(f)}}$ is quasi-finite at $x$. Then there exist an open subvariety $U$ of $X$, that contains $x$ and contained in $\mathrm{Dom}(f)$, and an open subvariety $V$ of $Y$, which contains $f(U)$, such that $f_{\vert_{U}}\colon U \rightarrow V$ is quasi-finite, and birational by assumption. By Grothendieck's form of Zariski's Main Theorem applied to the morphism $f_{\vert_{U}}$, we conclude that $f_{\vert_{U}}\colon U\rightarrow Y$ is an open immersion, or equivalently that $f\colon X\dasharrow Y$ is a local isomorphism at $x$.
\fi
\end{proof}

\begin{rema}
Let $n\in \mathbb{N}$, $n\geq 1$. To emphasize Lemma \ref{equivalence top}, consider the birational $\mathbb{C}$-morphism $f\colon \mathbb{A}_\mathbb{C}^n \rightarrow \mathbb{A}_\mathbb{C}^n$ defined by the datum of $(f_i)_{1\leq i\leq n}\in (\mathbb{C}[x_1,\ldots,x_n])^n$. Then we deduce from the Formal Inverse Function Theorem that the set of the $\mathbb{C}$-points of $\mathbb{A}_\mathbb{C}^n$ at which $f$ is not a local isomorphism is defined as the $\mathbb{C}$-points of the hypersurface of $\mathbb{A}_\mathbb{C}^n$ defined by the determinant of the Jacobian matrix of $f$, i.e., defined by the equation
$$
\mathrm{det}((\partial f_i/\partial x_j)_{1\leq i,j\leq n})=0.
$$
\end{rema}

\begin{lemm}
\label{unireglage}
Let $f\colon X\dasharrow Y$ be a birational map between two integral $k$-varieties. Assume that $Y$ is normal. Let $U_0\subset \mathrm{Dom}(f)$ be the open set of points of $X$ at which $f$ is a local isomorphism. Let us set $V_0 := f(U_0)$. Then
\begin{enumerate}[a\emph{)}]
\item The morphism of $k$-schemes $f_{\vert_{U_0}}\colon U_0\rightarrow Y$ is an open immersion. So the map $f$ induces an isomorphism of $k$-schemes $U_0 \rightarrow V_0$.

\item Assume that $X,Y$ are proper and smooth over $k$. Each irreducible component of codimension 1 in $(X \setminus U_0)_{\mathrm{red}}$ is ruled. 
\end{enumerate}
\end{lemm}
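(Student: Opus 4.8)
The plan is to prove the two assertions separately, deriving (a) from Zariski's Main Theorem together with the normality hypothesis, and (b) from a valuation-theoretic analysis of $E$ combined with the uniruledness of exceptional divisors of birational morphisms between smooth varieties.

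\smallskip

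\emph{Proof of (a).} First I note that the iso\-morphism locus $U$ of the birational map $f$ is a dense open subvariety contained in $U_0$, so $U_0$ is dense and $f_{\vert_{U_0}}\colon U_0\to Y$ is birational. By Lemma \ref{equivalence top}, at every point of $U_0$ the map $f$ is a local isomorphism, hence $f_{\vert_{U_0}}$ is \'etale (and induces trivial residue field extensions), in particular quasi-finite. Since the structure morphism $U_0\to\Spec k$ is separated and factors through $f_{\vert_{U_0}}$, the morphism $f_{\vert_{U_0}}$ is itself separated. I then apply Grothendieck's form of Zariski's Main Theorem to factor $f_{\vert_{U_0}}$ as an open immersion $U_0\hookrightarrow T$ followed by a finite morphism $\pi\colon T\to Y$. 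The morphism $\pi$ is finite and birational with $Y$ normal and integral, so $\pi$ is an isomorphism. Therefore $f_{\vert_{U_0}}$ is an open immersion, its image $V_0$ is open, and $f$ induces an isomorphism $U_0\xrightarrow{\sim}V_0$. This is the exact point where normality of $Y$ is used.

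\smallskip

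\emph{Proof of (b).} Fix a codimension-one irreducible component $E$ of $(X\setminus U_0)_{\mathrm{red}}$, with generic point $\eta$. As $X$ is smooth, $\mathcal{O}_{X,\eta}$ is a discrete valuation ring and defines a divisorial valuation $v$ of $k(X)=k(Y)$. Since $Y$ is proper, $v$ has a unique center $Z'$ on $Y$. I claim $\mathrm{codim}_Y Z'\geq 2$. Otherwise $Z'$ is a prime divisor, $\mathcal{O}_{Y,\eta_{Z'}}$ is a DVR dominated by $\mathcal{O}_{X,\eta}$ inside $k(Y)$; since a DVR is maximal for domination among local subrings of a fixed fraction field, the two rings coincide, so in particular $\eta\in\mathrm{Dom}(f)$ and $f$ induces an isomorphism of local rings at $\eta$. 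By Lemma \ref{equivalence top} this makes $f$ a local isomorphism at $\eta$, contradicting $\eta\notin U_0$. Hence $E$ is contracted by $f$ onto a subvariety of codimension at least two.

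\smallskip

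Next I resolve $f$: let $W$ be a smooth projective variety carrying proper birational morphisms $p\colon W\to X$ and $q\colon W\to Y$ with $q=f\circ p$ (a desingularization of the closure of the graph of $f$, available in characteristic zero). Let $\widetilde E\subset W$ be the strict transform of $E$; it is a prime divisor mapping birationally onto $E$ via $p$ and carrying the valuation $v$, so $q(\widetilde E)=Z'$ has codimension $\geq 2$ and $\widetilde E$ is $q$-exceptional. The crux is then the classical fact that the exceptional divisors of a birational morphism between smooth projective varieties in characteristic zero are uniruled, their positive-dimensional general fibres being covered by rational curves; I would establish this either by direct appeal to that result or, staying within the toolkit of the introduction, by factoring $q$ through blow-ups along smooth centres via the Weak Factorization Theorem, each such blow-up producing a projective-bundle (hence ruled) exceptional divisor, and invoking the preservation of uniruledness under dominant rational maps. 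Consequently $\widetilde E$ is uniruled, and as $\widetilde E\dashrightarrow E$ is birational and uniruledness is a birational invariant, $E$ is uniruled; in the case of interest ($\dim X=3$, so $E$ a surface) this is equivalent to $E$ being ruled by the convention recalled above.

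\smallskip

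The main obstacle is precisely this ruledness of contracted divisors: assertion (a) is essentially formal once Zariski's Main Theorem and the normality of $Y$ are in place, whereas the substance of (b) is to control the geometry of the divisor contracted by $f$, for which the Weak Factorization Theorem furnishes the natural bridge.
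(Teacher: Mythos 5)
Your part (a) is correct and is essentially the paper's own argument: by Lemma \ref{equivalence top} the restriction $f_{\vert_{U_0}}$ is \'etale, hence birational, separated and locally quasi-finite, and Grothendieck's form of Zariski's Main Theorem together with the normality of $Y$ yields the open immersion. You merely spell out the step the paper leaves implicit, namely that the finite part of the Zariski factorization (restricted, to be precise, to the scheme-theoretic closure of $U_0$ in $T$, which is the only small imprecision in your wording) is finite and birational onto the normal integral $Y$, hence an isomorphism. The structure of your part (b) also matches the paper's: resolve $f$ by a model dominating $X$ and $Y$, observe that the strict transform of a codimension-one component $E$ of $X\setminus U_0$ is exceptional for the morphism to $Y$, and then invoke a classical result on exceptional divisors. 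Your valuation-theoretic justification of the exceptionality (if the center of the divisorial valuation $\mathcal{O}_{X,\eta}$ on $Y$ had codimension one, maximality of DVRs under domination would force an isomorphism of local rings, hence by Lemma \ref{equivalence top} a local isomorphism at $\eta$, contradicting $\eta\notin U_0$) is a clean and more detailed substitute for the paper's terse ``because of the choice of $U_0$ and the dimension of $\eta$''.

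The genuine gap is in the final step of (b). The lemma asserts, in arbitrary dimension, that the components are \emph{ruled}, and the paper obtains exactly this by invoking Abhyankar's theorem: an exceptional divisor of a birational morphism between smooth proper varieties is ruled, i.e.\ birational to $\mathbb{P}^1_k\times_k S$. You instead invoke only \emph{uniruledness} of exceptional divisors and then upgrade to ruledness via the characteristic-zero equivalence for surfaces; this proves the lemma only when the codimension-one components are surfaces, i.e.\ when $\dim X=3$. In dimension $\geq 4$ this argument fails, because uniruled does not imply ruled for the components: a smooth cubic threefold is unirational, hence uniruled, but not ruled --- if it were birational to $S\times\mathbb{P}^1_\mathbb{C}$, then $S$ would be a unirational, hence rational, surface, making the cubic rational, contradicting \cite{Clemens-Griffiths}. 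The fix is either to quote Abhyankar's theorem in its ruledness form (as the paper does), or to tighten your weak-factorization fallback so that it produces a birational equivalence rather than mere domination: the strict transform of $\widetilde{E}$ remains a divisor, birational to $E$, on every intermediate model of the factorization until the step at which it is contracted; at that step it is a divisor contracted by a blow-up with smooth center, hence it \emph{equals} the exceptional divisor of that blow-up, a projective bundle of positive relative dimension over a smooth base. Thus $E$ is birational to such a bundle and is therefore ruled, in every dimension.
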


\begin{proof}
$a$) From the definition of $U_0$ and from Lemma \ref{equivalence top}, we deduce that the restriction of the morphism $f_{\vert_{\mathrm{Dom}(f)}}$ to $U_0$ is separated, birational and locally quasi-finite. From Grothendieck's form of Zariski's Main Theorem applied to the morphism $f_{\vert_{U_0}}$, we conclude that $f_{\vert_{U_0}}$ is an open immersion. So it induces an isomorphism of $k$-schemes from $U_0$ to $V_0$.

$b$) By eliminating the indeterminacies of $f$ (e.g., see \cite[Lemma 1.3.1]{Wlo}), we find a composite of blow-ups $\pi\colon Z\rightarrow X$, with smooth centers disjoint from $U_0$, and a birational morphism $g\colon Z\rightarrow Y$ such that $g=f\circ \pi$. Let $\eta$ be the generic point of an irreducible component $X_\eta$ of maximal dimension in $(X \setminus U_0)_{\mathrm{red}}$. Note that $\eta\in \mathrm{Dom}(f)$. The strict transform by $\pi$ of $X_\eta$ (considered with its reduced scheme structure) is a divisor $Z_\eta$ of $Z$, birationally equivalent to $X_\eta$. But, because of the choice of $U_0$ and the dimension of $\eta$, the subvariety $Z_\eta$ necessarily is an irreducible component of the exceptional locus of the morphism $g$. By Abhyankar's Theorem on exceptional sets of birational morphisms, we deduce that $Z_\eta$ is ruled, i.e., birationally equivalent to $\mathbb{P}^1_\mathbb{C}\times_\mathbb{C} S$ where $S$ is an integral $k$-variety of dimension $
 \mathrm{dim}(X)-2$. So $X
 _\eta$ is ruled.
\end{proof}

\begin{rema}
Note that Lemma \ref{unireglage} in particular shows that the set of points of $X$ at which $f$ is a local isomorphism is the largest open subset $U$ of $X$ such that $f_{\vert_U}$ is an isomorphism of $k$-schemes.

\end{rema}

The Weak Factorization Theorem was a great step in the understanding of the geometry of birational maps. 
We recall below the statement of this theorem.\\

Let $X,Y$ be smooth irreducible proper (resp. projective) $k$-varieties. Let $f\colon X\dasharrow Y$ be a birational map that induces an isomorphism of $k$-schemes $U\rightarrow V$ on a dense open subvariety $U$ of $X$ onto a dense open subvariety $V$ of $Y$. We say that $f$ admits a \emph{weak factorization} if there exist birational morphisms $b_i\colon Z_i\rightarrow X_i$, $c_i\colon Z_i\rightarrow X_{i+1}$, for $i$, $1\leq i\leq n$ between smooth irreducible proper (resp. projective) $k$-varieties such that:

$$
\xymatrix{&Z_1\ar[dl]_{b_1}\ar[dr]^{c_1}&&&&Z_n\ar[dl]_{b_n}\ar[dr]^{c_n}&\\
X_1&&X_2&\ldots&X_n&&X_{n+1}}
$$

\begin{enumerate}[$a$)]
\item $X:=X_1$, $Y:=X_{n+1}$;
\item For any $i$, $1\leq i\leq n$, the $b_i$ and the $c_i$ are composites of blow-ups, with smooth irreducible centers, disjoint from $U$ (note that, in our notation and for convenience in the following computations, $c_n\colon Z_n\rightarrow Y$ or $b_1\colon Z_1\rightarrow X$ could be the blow-up with empty center, i.e., $c_n=\mathrm{id}_Y$ and $b_1=\mathrm{id}_X$).
\end{enumerate}

\begin{theo}[{\cite[Theorem 0.1.1]{Wlo}}]
\label{factorization faible}
Let $X,Y$ be two smooth irreducible proper $k$-varieties. Let $f\colon X\dasharrow Y$ be a birational map. Then $f$ admits a weak factorization.
\end{theo}

\begin{exam}
In dimension 2, a stronger version of Theorem \ref{factorization faible} goes back to \textsc{Oscar Zariski}. Let $X,Y$ be two  smooth irreducible projective $k$-surfaces, and let $f\colon X\dasharrow Y$ be a birational map. The elimination of the indeterminacies of $f$ produces a weak factorization of $f$. Indeed, in that case, one knows that there exist composites of blow-ups of points $b\colon Z\rightarrow X, c\colon Z\rightarrow  Y$ making the following diagram commute (see \cite[Theorem \textbf{9}.2.7]{Liu}):
$$
\xymatrix{&Z\ar[dl]_{b}\ar[dr]^{c}&\\
X\ar@{-->}[rr]_{{f}}&&Y}
$$
The existence of such a \emph{strong factorization} in any dimension is still a conjecture.
\end{exam}

The following remark should be kept in mind for the proof of Theorem \ref{le theoreme}.

\begin{rema} \label{rem:uniruled}
Let $b_{ij}$ be a blow-up that appears in the expression of $b_i$. Then two cases occur.
\begin{enumerate}[$a$)]
\item First, the strict transform of its exceptional divisor might be equal for some $j'$ to the exceptional divisor of a blow-up $c_{i' j'}$ appearing in the expression of $c_{i'}$. Note that, in that case, the strict transform is then contracted by $c_{i' j'}$.
\item On the contrary, it can happen that no blow-up appearing in any $c_i$ contracts the strict transform of its exceptional divisor. In that case, we can consider that strict transform in $Y$. Then it is equal to a divisor of $Y$ contained in $Y\setminus V$. 
Note that if $V$ is chosen as the maximal open subset on which $f^{-1}$ is well-defined and injective, then all the irreducible components, of dimension $\mathrm{dim}(Y)-1$, in $Y\setminus V$ are realized this way, and in particular are ruled. (This last assertion follows from Lemma \ref{unireglage}, or from the Weak Factorization Theorem).
\end{enumerate}
\end{rema}


\subsection{Cohomology theories and examples of motivic measures}
\label{realisation de Hodge-Deligne}

In this paragraph, we assume that $k$ is a subfield of the field of complex numbers, except when stated otherwise. Let $X$ be a smooth proper $k$-variety. We denote by 
$H^\ast_{\mathrm{dR}}(X)$ the \emph{algebraic de Rham cohomology} of $X$. Then 
$$\dim_k H^n_{\mathrm{dR}}(X)=\sum_{i+j=n}\dim_k H^i(X,\Omega^j_{X/k})$$
(Hodge's decomposition theorem) for any integer $n$. Let us denote by $H^{p,q}(X)$ the $\mathbb{C}$-vector space $H^{q}(X,\Omega^p_{X/k})\otimes_k \mathbb{C}$, for any pair $p,q$, $0\leq p,q\leq \mathrm{dim}(X)$. The integer $$h^{p,q}(X):=\dim_\mathbb{C} H^{p,q}(X)$$ is called the 
$(p,q)$-\emph{Hodge number} of $X$. We
denote by $H^\ast(X):=H^\ast(X^{\mathrm{an}},\mathbb{Z})$ the \emph{integral singular
cohomology} of $X^{\mathrm{an}}$. By Grothendieck's results 
(\cite{Groth_dR}, 
Theorem $1'$), we know that the canonical map $H^\ast_{\mathrm{dR}}(X)\otimes_k \mathbb{C}\simeq H^\ast(X)\otimes_\mathbb{Z}\mathbb{C}$ is an isomorphism. For $i$, $1\leq i\leq 2\mathrm{dim}(X)$, we denote by 
$$
b_i(X):=\mathrm{dim}_\mathbb{Q} H^i(X)\otimes_\mathbb{Z}\mathbb{Q}=\mathrm{dim}_\mathbb{C} H^i(X)\otimes_\mathbb{Z}\mathbb{C}
$$
the $i$-th \emph{Betti number} of $X$.\\

By Deligne's
mixed Hodge theory, the assignment to a $\mathbb{C}$-variety $X$ of its \emph{Hodge polynomial}
$X\mapsto H_X(u,v)$ defines a unique ring morphism
$$
K_0(Var_{\mathbb{C}})\rightarrow \mathbb{Z}[u,v]
$$
(see \cite[Theorem 1.1]{Srinivas} or \cite[\S 6.1]{Denef-Loeser}). Recall that, if $X$ is a smooth proper $\mathbb{C}$-variety, its Hodge polynomial is defined as
$$
HD_X(u,v):=\sum_{p,q\geq 0} h^{p,q}(X)u^pv^q.
$$ 
We call 
\emph{Poincar\'e polynomial} of a $\mathbb{C}$-variety $X$ the polynomial $P_X\in 
\mathbb{Z}[u]$, defined by $P_X(u):=HD_X(u,u)$. This assignment defines
a unique ring morphism
$$
K_0(Var_{\mathbb{C}})\rightarrow \mathbb{Z}[u].
$$
Recall that $P_X(u)=\sum_{n=0}^{2\dim X}\dim_\mathbb{C} H^n_{\mathrm{dR}}(X)u^n=\sum_{n=0}^{2\dim X} b_n(X)u^n$
if $X$ is a smooth proper $\mathbb{C}$-variety, by Hodge Decomposition Theorem. \\

One defines the \emph{Euler characteristic} with compact support to be the ring morphism
$$
\chi_c\colon K_0(Var_{\mathbb{C}})\rightarrow \mathbb{Z}
$$
defined by $\chi_c([X])=P_X(-1)$, for any $\mathbb{C}$-variety $X$. If $X$ is a smooth proper $\mathbb{C}$-variety, then
$$
\chi_{c}([X]):= \sum_{n=0}^{2\dim X}(-1)^nb_n(X).
$$ 
 This definition corresponds to the usual topological Euler characteristic associated to $X^{\mathrm{an}}$.

\begin{exam}[{see \cite[Proposition 2.3(a)]{Ivorra-Sebag}}]
Let $X,Y$ be two connected $k$-curves. Then $X,Y$ are piecewise isomorphic if and only if $X_{\mathrm{red}},Y_{\mathrm{red}}$ are birationally equivalent with $\chi_c(X)=\chi_c(Y)$.
\end{exam}

These constructions have the following useful consequence. 

\begin{lemm}
\label{dimension}
Let $k$ be an arbitrary field. Let us fix an algebraic closure $\bar{k}$ of $k$. Let $X,Y$ be $k$-varieties such that $[X]=[Y]$ in $\KZ$. Then the dimension of $X$ and $Y$ are equal. Besides, the number of irreducible components of maximal dimension of $X_{\bar{k}}$ is equal to that of $Y_{\bar{k}}$. 
\end{lemm}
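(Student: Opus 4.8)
The plan is to first reduce to the case where the base field is algebraically closed, and then to produce a motivic measure that is fine enough to read off both invariants directly from the class of a variety. For the reduction, note that base change along $k\hookrightarrow\bar{k}$ respects fibre products and the scissor relations, hence defines a ring morphism $\KZ\to K_0(\mathrm{Var}_{\bar{k}})$ sending $[Z]$ to $[Z_{\bar{k}}]$; applying it to $[X]=[Y]$ gives $[X_{\bar{k}}]=[Y_{\bar{k}}]$. Since $\dim Z=\dim Z_{\bar{k}}$ and the two quantities in the statement are by definition computed on $X_{\bar{k}}$ and $Y_{\bar{k}}$, it suffices to prove the following: if $k$ is algebraically closed and $[X]=[Y]$ in $\KZ$, then $\dim X=\dim Y$ and $X,Y$ have the same number $r$ of irreducible components of maximal dimension.

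The conceptual point is that the naive assignment $[X]\mapsto(\text{number of top-dimensional components})$ is \emph{not} additive (e.g.\ a line inside $\mathbb{A}^2_k$), so it is not a motivic measure and cannot be used directly. Instead I would use a polynomial-valued measure whose degree records the dimension and whose leading coefficient records the component count, the key feature being that these two contributions live in \emph{different} degrees, so no cancellation between strata can occur. Over $\C$ the Poincar\'e polynomial $P$ of Section~\ref{realisation de Hodge-Deligne} does exactly this: for a $\C$-variety $X$ of dimension $d$ having exactly $r$ irreducible components of dimension $d$, one stratifies $X$ into smooth locally closed pieces and uses additivity of $P$; the $r$ top-dimensional strata each contribute the monomial $u^{2d}$ (their top compactly supported cohomology being pure of weight $2d$ and one-dimensional), while every stratum of dimension $<d$ contributes a polynomial of degree $<2d$. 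Hence $\deg P_X=2d$ and the coefficient of $u^{2d}$ in $P_X$ equals $r$. Applying $P$ to $[X]=[Y]$ then settles the case $k=\C$.

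To pass to an arbitrary algebraically closed $k$, I would reduce to the above. The identity $[X]=[Y]$ is a finite combination of scissor relations, so all schemes and relations involved are defined over a finitely generated subfield $k_1\subseteq k$; since the numbers of geometrically irreducible components of maximal dimension are unchanged under extension of algebraically closed base fields, it suffices to prove the claim over an algebraic closure of $k_1$. In characteristic $0$ a finitely generated field embeds into $\C$, and base changing the models from $k_1$ to $\C$ reduces matters to the previous paragraph. In characteristic $p$ I would spread the finitely many models out over an integral finitely generated $\mathbb{F}_p$-algebra $A$, arranging (after shrinking $\Spec A$) that the scissor relations hold over $A$ and that the dimension and the number of top-dimensional geometrically irreducible components of the fibres are constant, equal to their generic values. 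Specializing at a closed point with finite residue field $\mathbb{F}_q$ gives $[\mathcal{X}_{\mathbb{F}_q}]=[\mathcal{Y}_{\mathbb{F}_q}]$ in $K_0(\mathrm{Var}_{\mathbb{F}_q})$, hence $\#\mathcal{X}(\mathbb{F}_{q^n})=\#\mathcal{Y}(\mathbb{F}_{q^n})$ for every $n\ge 1$, since point counting is a ring morphism $K_0(\mathrm{Var}_{\mathbb{F}_q})\to\Z$ for each $n$. The Lang--Weil estimate $\#\mathcal{X}(\mathbb{F}_{q^n})=c_X\,q^{n\dim X}+O(q^{n(\dim X-1/2)})$, with $c_X$ the number of top-dimensional geometrically irreducible components, then lets me compare the growth of the two counts and conclude that the dimensions and the component numbers coincide.

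I expect the main obstacle to be precisely the failure of additivity of the component count noted above: the content of the lemma is not that dimension and top-component number are invariants of a variety, but that they survive the scissor relations, i.e.\ that they are governed by a genuine motivic measure. The $\C$-case resolves this by placing the component count in the strictly top degree of $P_X$, where it cannot be cancelled by lower-dimensional strata; the positive-characteristic case requires the extra spreading-out step to gain access to the point-counting measures, after which Lang--Weil plays the same role of isolating the leading term.
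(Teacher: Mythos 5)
Your proposal is correct, and over $\C$ it rests on the same motivic measure as the paper's proof --- the Poincar\'e (Hodge--Deligne) polynomial, with dimension read off from the degree and the number of top-dimensional components from the leading coefficient --- but the implementation and the scope differ in two genuine ways. First, where the paper invokes Nagata compactification and Hironaka resolution to rewrite $[X]=[Y]$ as an identity among classes of smooth irreducible \emph{proper} varieties and then gets the leading coefficient from Serre duality ($b_{2\dim Z}(Z)=1$), you work directly with a stratification into smooth locally closed pieces and control the leading term by the weight bound on compactly supported cohomology ($H^{2d}_c$ of a smooth irreducible $d$-fold is one-dimensional of type $(d,d)$, lower $H^i_c$ has weights $\le i<2d$); this trades resolution of singularities for a slightly heavier use of mixed Hodge theory, and it isolates exactly the right point, namely that the component count survives the scissor relations because it sits in strictly top degree. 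Second, the paper's written proof only treats $k=\C$ and delegates the general case to a citation of Liu--Sebag, whereas you prove the full statement: the reduction to a finitely generated subfield, the Lefschetz-principle embedding into $\C$ in characteristic zero, and the spreading-out plus point-counting argument in characteristic $p$ are all sound, and the characteristic-$p$ branch is genuinely self-contained (specialization of the finitely many witnessing relations at a closed point of $\Spec A$ is legitimate after shrinking, and point counting over each $\mathbb{F}_{q^n}$ is a motivic measure). The only point to tighten is the Lang--Weil step: the coefficient of $q^{nd}$ counts the top-dimensional geometrically irreducible components that are \emph{defined over} $\mathbb{F}_{q^n}$ (i.e.\ stable under the $n$-th power of Frobenius), so you should let $n$ run over sufficiently divisible integers --- or replace $\mathbb{F}_q$ by a finite extension splitting all geometric components --- before comparing the asymptotics; with that adjustment the conclusion $\dim X=\dim Y$ and $c_X=c_Y$ follows as you say.
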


\begin{proof}
See, for example, \cite[ Proposition 4, Corollary 5]{Liu-Sebag}. Below, we give a proof when $k$ equals $\mathbb{C}$, that is the framework of Theorem \ref{le theoreme}.

By the scissor relations in $\KZ$ and by the assumption, we have $[X_{\mathrm{red}}]=[X]=[Y]=[Y_{\mathrm{red}}]$. Since the statement only depends on the reduced scheme structures of $X$ and $Y$, we can assume that $X,Y$ are reduced $\mathbb{C}$-varieties. Let $X_1,\ldots,X_m$ (resp. $Y_1,\ldots, Y_n$) be the irreducible components of $X$ (resp. $Y$). By Nagata's Compactification Theorem and Hironaka's Theorem on resolution of singularities, we can find smooth irreducible proper $k$-varieties $\bar{X}_1,\ldots,\bar{X}_n$ (resp. $\bar{Y}_1,\ldots,\bar{Y}_n$) such that, for any $i$, $1\leq i\leq m$ (resp. $j$, $1\leq j\leq n$), $\bar{X}_i$ is birationally equivalent to $X_i$ (resp. $\bar{Y}_j$ is birationally equivalent to $Y_j$), and smooth irreducible proper $k$-varieties $C_1,\ldots,C_s,D_1,\ldots, D_t$ that transform the equality of classes $[X]=[Y]$ in $\KZ$ in the following one:
$$
\sum_{i=1}^m[\bar{X_i}]+\sum_{\alpha=1}^s\varepsilon_\alpha[C_\alpha]=\sum_{j=1}^n[\bar{Y_j}]+
\sum_{\beta=1}^t\varepsilon_\beta[D_\beta],
$$
with $\varepsilon_\alpha,\varepsilon_\beta\in\{1,-1\}$. By computing the degrees and the leading coefficients of the Poincar\'e polynomials associated to the left and right-hand terms, we obtain the conclusion of the statement. Indeed, by Serre Duality, one
knows that:
$$
h^{\mathrm{dim}(Z),\mathrm{dim}(Z)}(Z)=b_{2\mathrm{dim}(Z)}(Z)=1,
$$
for any smooth irreducible proper $k$-variety $Z$.
\end{proof}

We can also prove the following lemma.

\begin{lemm}
\label{piece weak0}
Let $k$ be an algebraically closed field of characteristic zero. Let $X,Y$ be piecewise isomorphic smooth irreducible proper $k$-varieties. Let $f\colon X\dasharrow Y$ be a birational map. Let 
$$
\xymatrix{&Z_1\ar[dl]_{b_1}\ar[dr]^{c_1}&&&&Z_n\ar[dl]_{b_n}\ar[dr]^{c_n}&\\
X&&&\ldots&&&Y}
$$
be a weak factorization of $f$. If $1\leq i\leq n$, let us denote by $C_{ij}$, for $1\leq j\leq s_i$, the (non-empty) smooth irreducible center (of codimension $\alpha_j$) of the blow-ups defining $b_i$, and by $D_{ij}$, for $1\leq j\leq t_i$, the (non-empty) smooth irreducible centers (of codimension $\beta_j$) of the blow-ups defining $c_i$.Then 
\begin{equation}
\label{piece weak1}
\mathbb{L}\sum_{i=1}^n\sum_{j=1}^{s_i}[C_{ij}][\mathbb{P}^{\alpha_j-2}_k]=\mathbb{L}\sum_{i=1}^n\sum_{j=1}^{t_i}[D_{ij}][\mathbb{P}^{\beta_j-2}_k].
\end{equation}

\end{lemm}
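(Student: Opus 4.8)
The plan is to translate each blow-up occurring in the weak factorization into an identity in $\KZ$, to telescope these identities along the factorization, and then to exploit the fact that piecewise isomorphic varieties have the same class in the Grothendieck ring.

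First I would record the class of a single blow-up. If $\pi\colon\widetilde{W}\rightarrow W$ is the blow-up of a smooth irreducible $k$-variety $W$ along a smooth irreducible closed center $C$ of codimension $c$, the exceptional divisor $E$ is the projectivization $\mathbb{P}(N_{C/W})$ of the normal bundle, hence a Zariski-locally trivial $\mathbb{P}^{c-1}$-bundle over $C$. Stratifying $C$ by a trivializing open cover and combining the scissor relations with the multiplicativity of the class, one obtains $[E]=[C][\mathbb{P}^{c-1}_k]$ in $\KZ$. Since $\pi$ is an isomorphism away from $C$, the scissor relations then give
\begin{equation*}
[\widetilde{W}]=[W]-[C]+[C][\mathbb{P}^{c-1}_k]=[W]+\mathbb{L}[C][\mathbb{P}^{c-2}_k],
\end{equation*}
where I used $[\mathbb{P}^{c-1}_k]-1=\mathbb{L}+\cdots+\mathbb{L}^{c-1}=\mathbb{L}[\mathbb{P}^{c-2}_k]$.

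Next I would apply this formula successively to the blow-ups composing $b_i$ and $c_i$. Writing $b_i$ as a tower of blow-ups with smooth irreducible centers $C_{ij}$ of codimension $\alpha_j$ (each center being smooth in the relevant intermediate variety, as is guaranteed by the definition of a weak factorization), an immediate induction yields
\begin{equation*}
[Z_i]=[X_i]+\mathbb{L}\sum_{j=1}^{s_i}[C_{ij}][\mathbb{P}^{\alpha_j-2}_k],
\end{equation*}
and likewise $[Z_i]=[X_{i+1}]+\mathbb{L}\sum_{j=1}^{t_i}[D_{ij}][\mathbb{P}^{\beta_j-2}_k]$ from the factorization of $c_i$. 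Equating the two expressions for $[Z_i]$ and summing over $i$ from $1$ to $n$, the terms $[X_i]$ telescope, so only $[X_1]=[X]$ and $[X_{n+1}]=[Y]$ survive, giving
\begin{equation*}
[X]-[Y]=\mathbb{L}\sum_{i=1}^n\sum_{j=1}^{t_i}[D_{ij}][\mathbb{P}^{\beta_j-2}_k]-\mathbb{L}\sum_{i=1}^n\sum_{j=1}^{s_i}[C_{ij}][\mathbb{P}^{\alpha_j-2}_k].
\end{equation*}
Since $X$ and $Y$ are piecewise isomorphic they satisfy $[X]=[Y]$ in $\KZ$, so the left-hand side vanishes and the desired identity \eqref{piece weak1} follows.

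The argument is essentially formal, so I do not expect a serious obstacle; the only non-tautological input is the identity $[E]=[C][\mathbb{P}^{c-1}_k]$ for the exceptional divisor, which rests on the Zariski-local triviality of the projectivized normal bundle. The one point requiring care is purely bookkeeping: the codimension and the smoothness of each successive center must be read in the appropriate intermediate blow-up rather than in $X_i$ itself, and the convention $[\mathbb{P}^{-1}_k]=0$ absorbs any formally degenerate codimension-$1$ (trivial) blow-up that the notation of the statement allows.
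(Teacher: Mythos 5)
Your proposal is correct and follows essentially the same route as the paper: the blow-up formula $[\widetilde{W}]=[W]+\mathbb{L}[C][\mathbb{P}^{c-2}_k]$ in $\KZ$ (which the paper cites from the literature rather than rederiving via the projectivized normal bundle), applied to every blow-up in each $b_i$ and $c_i$, followed by telescoping the resulting relations and using that piecewise isomorphic varieties satisfy $[X]=[Y]$. The only cosmetic difference is bookkeeping of degenerate blow-ups, which the paper handles by discarding identity maps from the diagram and you handle by a harmless convention.
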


\begin{proof}
If $b_1=\mathrm{id}_X$, or $c_n=\mathrm{id}_Y$, eliminate $X=X_1$ or $Y=X_{n+1}$ from the diagram. Let $V$ be a smooth irreducible proper $k$-variety. Let $W$ be the blow-up of $V$, with smooth irreducible center $C$ of codimension $\alpha$ in $V$. Then, by the blow-up formula in $\KZ$, we deduce that:
$$
[W]-[C][\mathbb{P}_{k}^{\alpha -1}]=[V]-[C],
$$
or equivalently, using the identity $\mathbb{P}^\gamma_k = \sum_{i=0}^\gamma \mathbb{L}^i$,
$$
[W]-\mathbb{L}[C][\mathbb{P}_{k}^{\alpha -2}]=[V]
$$
(see, for example, \cite[Example 1]{Liu-Sebag}). Applying this formula to each blow-up appearing in the $b_i$, and to each blow-up
appearing in the $c_j$, we deduce the result by adding all the obtained relations.

\end{proof}

\begin{coro}
\label{piece weak}
Let $X,Y$ be piecewise isomorphic smooth irreducible proper $\mathbb{C}$-varieties. Let $f\colon X\dasharrow Y$ be a birational map. Let 
$$
\xymatrix{&Z_1\ar[dl]_{b_1}\ar[dr]^{c_1}&&&&Z_n\ar[dl]_{b_n}\ar[dr]^{c_n}&\\
X&&&\ldots&&&Y}
$$
be a weak factorization of $f$. Then the number of all the (non-empty) blow-ups appearing in the expression of the $b_i$, $1\leq i\leq n$, is equal to the number of all (non-empty) the blow-ups appearing in the expression of the $c_i$, $1\leq i\leq n$.
\end{coro}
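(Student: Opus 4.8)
The plan is to let Lemma \ref{piece weak0} do the heavy lifting and then isolate the count via a well-chosen motivic measure. Since $X$ and $Y$ are piecewise isomorphic, one has $[X]=[Y]$ in $\KZC$, so the hypotheses of Lemma \ref{piece weak0} are satisfied and the identity (\ref{piece weak1}) holds in $\KZC$:
\begin{equation*}
\mathbb{L}\sum_{i=1}^n\sum_{j=1}^{s_i}[C_{ij}][\mathbb{P}^{\alpha_j-2}_\mathbb{C}]=\mathbb{L}\sum_{i=1}^n\sum_{j=1}^{t_i}[D_{ij}][\mathbb{P}^{\beta_j-2}_\mathbb{C}].
\end{equation*}
Writing $N_b:=\sum_{i=1}^n s_i$ for the number of non-empty blow-ups occurring in the $b_i$ and $N_c:=\sum_{i=1}^n t_i$ for the number occurring in the $c_i$, I must show $N_b=N_c$. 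Observe first that, the blow-ups being non-trivial with smooth centers, each codimension satisfies $\alpha_j,\beta_j\geq 2$, so every class $[\mathbb{P}^{\alpha_j-2}_\mathbb{C}]$, $[\mathbb{P}^{\beta_j-2}_\mathbb{C}]$ is well defined.

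The key idea is that the Poincar\'e polynomial morphism $P\colon \KZC\to\mathbb{Z}[u]$ turns this identity into an equality in $\mathbb{Z}[u]$ from which $N_b$ and $N_c$ can be read off as the coefficient of $u^2$. Indeed $P(\mathbb{L})=u^2$ (since $\mathbb{L}=[\mathbb{P}^1_\mathbb{C}]-[\mathrm{pt}]$ maps to $(1+u^2)-1$); for each $m\geq 0$ the polynomial $P([\mathbb{P}^m_\mathbb{C}])=1+u^2+\cdots+u^{2m}$ has constant term $1$; and $P([C_{ij}])=P_{C_{ij}}(u)$ has constant term $b_0(C_{ij})=1$ because $C_{ij}$ is irreducible, hence connected (and likewise for the $D_{ij}$). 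Consequently each summand $\mathbb{L}[C_{ij}][\mathbb{P}^{\alpha_j-2}_\mathbb{C}]$ is sent to the polynomial $u^2\,P_{C_{ij}}(u)\,P_{\mathbb{P}^{\alpha_j-2}_\mathbb{C}}(u)$, whose coefficient of $u^2$ is exactly the constant term of $P_{C_{ij}}(u)\,P_{\mathbb{P}^{\alpha_j-2}_\mathbb{C}}(u)$, namely $1\cdot 1=1$. Summing over all summands, the coefficient of $u^2$ in the image of the left-hand side equals the number of summands, that is $N_b$, and similarly the coefficient of $u^2$ in the image of the right-hand side equals $N_c$. Equating the two polynomials then gives $N_b=N_c$. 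Equivalently, one may divide both images by $u^2$ in $\mathbb{Z}[u]$ and evaluate at $u=0$, which returns $N_b$ on the left and $N_c$ on the right.

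Given that Lemma \ref{piece weak0} already encodes the scissor bookkeeping, the only delicate point is choosing an invariant that literally counts the centers without cancellation, and verifying that the coefficient of $u^2$ does so. This is guaranteed by the fact that all the building blocks $C_{ij}$, $D_{ij}$ and $\mathbb{P}^m_\mathbb{C}$ are smooth, irreducible and proper, so their Poincar\'e polynomials have nonnegative coefficients and constant term $1$; multiplying by $P(\mathbb{L})=u^2$ shifts the lowest contribution of each term to degree $2$ (with coefficient $1$), and summing over nonnegative contributions precludes any interference. I therefore expect no real obstacle beyond this verification; the substance of the statement is entirely contained in the preceding lemma.
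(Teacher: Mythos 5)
Your proof is correct and takes essentially the same route as the paper: both apply the Poincar\'e polynomial motivic measure to the identity of Lemma \ref{piece weak0} and extract a single coefficient to which every (non-empty) blow-up contributes exactly $1$, so no cancellation can occur. The only, immaterial, difference is that you read off the coefficient of $u^2$ (using that each irreducible center has $b_0=1$), whereas the paper reads off the leading coefficient, in degree $2\dim X-2$ (using that each smooth irreducible proper center $C$ has $b_{2\dim C}(C)=1$).
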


\begin{proof}
The proof follows from computing the leading coefficients of Poincar\'e polynomials in the formula of Lemma \ref{piece weak0}.
\end{proof}

\begin{rema}
\begin{enumerate}[$a$)]
\item With the notations of Lemma \ref{piece weak0}, assume that the dimension of $X$ (and so that of $Y$ by Lemma \ref{dimension}) is equal to 3. By looking at the proof of Lemma \ref{piece weak0}, we see that (\ref{piece weak1}) can be refined in:
\begin{equation}
\label{piece weak2}
\mathbb{L}\left(\sum_{i=1}^{n}\left(m_i(\mathbb{L}+1)+\sum_{\ell}[C_{ij_\ell}]\right)\right)=\mathbb{L}\left(\sum_{i=1}^{n}\left(n_i(\mathbb{L}+1)+\sum_{\ell'}[D_{ij_{\ell'}}]\right)\right)
\end{equation}
where $m_i$ (resp. $n_i$) is the number of blow-ups appearing in the expression of $b_i$ (resp. $c_i$), with centers a rational curve (i.e., $\mathbb{P}^1_k$) or a point, and where the $C_{ij_\ell}, D_{ij_{\ell'}}$ are smooth irreducible projective $k$-curves, whose genus $g$ is at least 1. If we compute Poincar\'e polynomials of the left and right-hand terms, then we deduce, by comparing the coefficients of the monomial of degree 3, that:
$$
\sum_{i=1}^{n}\sum_{\ell}g(C_{ij_\ell})=\sum_{i=1}^{n}\sum_{\ell'}g(D_{ij_{\ell'}}),
$$
where $g(\ast)$ is the genus of the $k$-curve $\ast$. In Proposition \ref{jacobienne}, we will see how to improve this result by introducing intermediate Jacobians.

\item One can prove Corollary \ref{piece weak} directly and independently from Lemma \ref{piece weak0} by computing Picard numbers, which increase by one under blow-up of an irreducible center.

\item Corollary \ref{piece weak} can be formulated when $k$ is any algebraically closed field of characteristic zero.
\end{enumerate}
\end{rema}

\subsection{Larsen--Lunts' motivic measure and some consequences}

In this paragraph we assume that $k$ is an algebraically closed field of characteristic zero. In \cite{Larsen-Lunts}, \textsc{M. Larsen} and \textsc{V. Lunts} have constructed a motivic measure that is a very important piece in the understanding of the Grothendieck ring of varieties and its structure. Precisely we have:

\begin{theo}[Larsen--Lunts]
Let $\mathbb{Z}[\mathfrak{sb}]$ be the ring generated by the classes of smooth irreducible proper $k$-varieties for the stably birational equivalence relation. 
There exists a unique surjective ring morphism
$$
\mathrm{SB}\colon \KZ\rightarrow \mathbb{Z}[\mathfrak{sb}],
$$
that assigns to the class in $\KZ$ of a smooth irreducible proper $k$-variety $X$ its stably birational class in $\mathbb{Z}[\mathfrak{sb}]$. Besides, its kernel is the ideal of $\KZ$ generated by $\mathbb{L}$.
\end{theo}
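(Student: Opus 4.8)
The plan is to build the morphism out of the presentation of $\KZ$ by smooth projective generators and blow-up relations (Bittner's presentation), which is available because $k$ has characteristic zero; this is the one external input I would need beyond the Weak Factorization Theorem (Theorem~\ref{factorization faible}) already recalled. In this presentation $\KZ$ is generated as an abelian group by the classes $[X]$ of smooth irreducible projective $k$-varieties, subject to $[\emptyset]=0$ and the blow-up relation $[\mathrm{Bl}_C X]-[E]=[X]-[C]$, where $C\subset X$ is a smooth irreducible center and $E$ its exceptional divisor. To define $\mathrm{SB}$ it then suffices to set $\mathrm{SB}([X]):=[X]_{\mathfrak{sb}}$, the stably birational class of $X$, on generators, and to check that the blow-up relation is sent to $0$.

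For that check I would use two geometric facts. First, $\mathrm{Bl}_C X$ is birational to $X$, hence $[\mathrm{Bl}_C X]_{\mathfrak{sb}}=[X]_{\mathfrak{sb}}$. Second, $E=\mathbb{P}(N_{C/X})$ is a Zariski-locally trivial projective bundle over $C$, so it is birational to $C\times\mathbb{P}^{c-1}$ (with $c=\mathrm{codim}_X C$), and thus stably birationally equivalent to $C$, whence $[E]_{\mathfrak{sb}}=[C]_{\mathfrak{sb}}$. Substituting, $\mathrm{SB}$ sends $[\mathrm{Bl}_C X]-[E]-[X]+[C]$ to $[X]_{\mathfrak{sb}}-[C]_{\mathfrak{sb}}-[X]_{\mathfrak{sb}}+[C]_{\mathfrak{sb}}=0$. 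Multiplicativity is immediate on generators, since $[X]\cdot[X']=[X\times X']$ and the stably birational class of a product of smooth irreducible proper varieties depends only on the classes of the factors; so $\mathrm{SB}$ is a ring morphism. Surjectivity is clear, as the generators $[X]_{\mathfrak{sb}}$ of $\mathbb{Z}[\mathfrak{sb}]$ all lie in the image, and uniqueness follows because any ring morphism with the prescribed values on smooth irreducible proper generators is determined on all of $\KZ$ by the presentation.

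For the kernel, I would first note that $\mathrm{SB}(\mathbb{L})=\mathrm{SB}([\mathbb{P}^1])-\mathrm{SB}([\mathrm{Spec}\,k])=[\mathbb{P}^1]_{\mathfrak{sb}}-1=0$ since $\mathbb{P}^1$ is rational; as $\ker\mathrm{SB}$ is an ideal this gives $(\mathbb{L})\subseteq\ker\mathrm{SB}$. For the reverse inclusion I would construct an inverse to the induced surjection $\overline{\mathrm{SB}}\colon\KZ/(\mathbb{L})\to\mathbb{Z}[\mathfrak{sb}]$, namely a ring morphism $\Psi\colon\mathbb{Z}[\mathfrak{sb}]\to\KZ/(\mathbb{L})$ with $\Psi([X]_{\mathfrak{sb}})=[X]\bmod\mathbb{L}$. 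The content here is that $[X]\bmod\mathbb{L}$ is a stable birational invariant of a smooth irreducible proper $X$: the blow-up formula of Lemma~\ref{piece weak0} gives $[\mathrm{Bl}_C X]-[X]=[C]([\mathbb{P}^{c-1}]-1)\in(\mathbb{L})$, so by the Weak Factorization Theorem any two birational smooth proper varieties have the same class modulo $\mathbb{L}$; and $[X\times\mathbb{P}^n]=[X][\mathbb{P}^n]\equiv[X]\pmod{\mathbb{L}}$ because $[\mathbb{P}^n]\equiv 1$, which upgrades birational to stable birational invariance. This makes $\Psi$ well-defined and multiplicative, and one checks that $\overline{\mathrm{SB}}$ and $\Psi$ are mutually inverse on generators, whence $\ker\mathrm{SB}=(\mathbb{L})$.

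The main obstacle is precisely the first step: having at one's disposal the presentation of $\KZ$ by smooth projective classes and blow-up relations, which is what legitimizes defining $\mathrm{SB}$ relation-by-relation rather than merely formally. Everything afterwards is bookkeeping with stable birational classes and the blow-up formula. This presentation is exactly where characteristic zero enters, through resolution of singularities and the Weak Factorization Theorem.
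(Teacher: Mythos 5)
The paper does not actually prove this theorem: it is quoted as a known result with a reference to \cite{Larsen-Lunts}, so there is no internal proof to compare against. Your argument is correct and is, in substance, the standard proof. The presentation of $\KZ$ by smooth projective generators and blow-up relations (Bittner's theorem) is indeed the crux, and it is legitimate to import it here since it holds in characteristic zero; note that Larsen and Lunts' original argument establishes essentially that same universal property directly from the Weak Factorization Theorem, so your route and theirs coincide where it matters. Your verification that the blow-up relation dies in $\mathbb{Z}[\mathfrak{sb}]$ (birational invariance for $[\mathrm{Bl}_C X]$, and $E \cong \mathbb{P}(N_{C/X})$ Zariski-locally trivial over $C$, hence stably birational to $C$) is right, as is the kernel argument: $\mathrm{SB}(\mathbb{L})=0$ gives one inclusion, and the inverse map $\Psi$ is well defined because $[X] \bmod (\mathbb{L})$ is a stable birational invariant, by the blow-up formula of Lemma \ref{piece weak0}, weak factorization, and $[\mathbb{P}^n]\equiv 1 \pmod{\mathbb{L}}$. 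Two small points deserve a sentence in a polished write-up: first, Bittner's generators are smooth \emph{projective} varieties while $\mathbb{Z}[\mathfrak{sb}]$ is built from smooth irreducible \emph{proper} ones, so you should invoke Chow's lemma and resolution to see that every smooth proper irreducible variety is birational to a smooth projective one, identifying the two sets of stable birational classes; second, for $\Psi\circ\overline{\mathrm{SB}}=\mathrm{id}$ you need that classes of smooth irreducible projective varieties generate $\KZ$ as a group, which again is the characteristic-zero generation statement (and is used implicitly elsewhere in the paper, e.g.\ in the proof of Lemma \ref{dimension}).
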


It is important to note that the description of the image of the morphism $\mathrm{SB}$ in terms of stably birational classes of $k$-varieties only holds for  classes of smooth irreducible proper $k$-varieties, which form a set of generators of the ring $\KZ$. The image of a $k$-variety which is not irreducible proper and smooth is defined as a linear combination of stably birational equivalence classes of smooth irreducible proper $k$-varieties. In particular, the image by $\mathrm{SB}$ of such a variety can be ``non-effective''. To illustrate  this point, consider the following example: It is easy to show that, if $Y$ is a rational $k$-variety of dimension 1, then \cite[Proposition 7]{Liu-Sebag} implies the following formula
$$
\mathrm{SB}([Y])=\chi_c([Y])-c_Y,
$$
where $c_Y$ is the number of irreducible components of dimension 1 of $Y$. See also \cite[Remark 1.6]{Ivorra-Sebag}. (Note that the canonical 
homomorphism $\Z\to \KZ$ is injective.) \\

The realization morphism has the following useful consequence.

\begin{prop}[{\cite[Proposition 6]{Liu-Sebag}}]
\label{cas des courbes}

Let $X,Y$ be $k$-varieties such that $[X]=[Y]$ in $\KZ$. Assume that the dimension of $X$ is at most 1. Then $X,Y$ are piecewise isomorphic.
\end{prop}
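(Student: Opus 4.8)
The plan is to reduce the statement to an explicit classification of reduced curves up to piecewise isomorphism, and then to extract the relevant invariants from the identity $[X]=[Y]$ using the motivic measures recalled above. Since both piecewise isomorphism and the class in $\KZ$ depend only on reduced structures, I may assume $X$ and $Y$ are reduced. By Lemma \ref{dimension}, $\dim Y=\dim X\le 1$ and $X,Y$ have the same number of irreducible components of dimension $1$; moreover the zero-dimensional part of a reduced curve is a finite disjoint union of isolated closed points. Writing each one-dimensional component $X_i$ with smooth projective model $\bar X_i$, and using that $X_i$ and $\bar X_i$ share the dense open subset given by the smooth locus of $X_i$, a straightforward stratification (smooth locus of the one-dimensional part, plus the finitely many singular and intersection points, plus the isolated points) shows that, up to piecewise isomorphism,
$$
X\ \cong\ \bigsqcup_{i=1}^{m}\bar X_i\ \sqcup\ \{r\ \text{closed points}\},
$$
where a negative value of $r$ is realized by deleting $|r|$ closed points from the (infinite) curves $\bar X_i$; here one repeatedly uses that a single closed point of a curve is a closed stratum whose open complement is the curve minus that point. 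The integer $r$ is then pinned down by $\chi_c(X)=\sum_i\chi_c(\bar X_i)+r$, since adding or deleting a point changes $\chi_c$ by $\pm 1$.

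From this normal form it follows that two reduced curves are piecewise isomorphic if and only if they carry the same unordered collection of smooth projective models $\bar X_i$ of their one-dimensional components and the same Euler characteristic (this is the disconnected refinement of the characterization recalled in the Example above). It therefore suffices to show that $[X]=[Y]$ forces both invariants to coincide. Equality of Euler characteristics is immediate, since $\chi_c$ is a ring morphism on $\KZ$. For the collection of models I would apply the Larsen--Lunts morphism $\mathrm{SB}$. From $[X_i]-[\bar X_i]\in\Z\cdot[\mathrm{pt}]$ and $\mathrm{SB}(\mathbb{L})=0$ one computes $\mathrm{SB}([X])=\sum_i\mathrm{SB}([\bar X_i])+N\,[\mathrm{pt}]$ for some integer $N$, where $\mathrm{SB}([\bar X_i])=[\mathrm{pt}]$ (the unit of $\mathbb{Z}[\mathfrak{sb}]$) if $\bar X_i\cong\mathbb{P}^1$, and $\mathrm{SB}([\bar X_i])$ is the nontrivial stable birational class of $\bar X_i$ if $g(\bar X_i)\ge 1$. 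Because $\mathbb{Z}[\mathfrak{sb}]$ is free on stable birational classes, the equality $\mathrm{SB}([X])=\mathrm{SB}([Y])$ matches, basis element by basis element, the positive-genus components of $X$ and $Y$ up to stable birational equivalence; the numbers of rational components then agree as well, since the total numbers of one-dimensional components agree by Lemma \ref{dimension}.

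The main point, and the step I expect to require the most care, is upgrading stable birational equivalence to isomorphism for the positive-genus components, so that the matched collections of models genuinely agree as birational (hence isomorphism) classes. For curves, birational equivalence already coincides with isomorphism, so the only issue is to pass from ``stably birational'' to ``birational''. This I would obtain from the birational invariance of the Albanese variety: one has $\mathrm{Alb}(\bar X_i\times\mathbb{P}^{n})\cong\mathrm{Jac}(\bar X_i)$, so stably birational curves have isomorphic Jacobians, whence the Torelli theorem recovers the curve up to isomorphism (equivalently, the maximal rationally connected quotient of $\bar X_i\times\mathbb{P}^n$ is $\bar X_i$ and is a birational invariant). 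Once the collections of models and the Euler characteristics are shown to coincide, the normal form above produces an explicit piecewise isomorphism between $X$ and $Y$, which completes the proof.
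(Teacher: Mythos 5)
Your proposal follows, in substance, the same route as the paper's own proof: reduce to reduced curves, replace the one-dimensional components by their smooth projective models, and extract from $[X]=[Y]$ exactly the two invariants that matter, namely the collection of models via the Larsen--Lunts morphism $\mathrm{SB}$ and the count of points via $\chi_c$, using Lemma \ref{dimension} to equalize the numbers of components. The packaging differs a little: you first establish a normal form for reduced curves up to piecewise isomorphism (disjoint union of the models, plus or minus finitely many points, the class being determined by the models together with $\chi_c$), whereas the paper builds an isomorphism on dense open subsets directly and then matches the finite complements by the same Euler-characteristic count; and where the paper invokes L\"uroth to handle the rational components, you obtain them by counting, since the positive-genus components are already matched and the total numbers agree. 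These differences are cosmetic; the skeleton is identical.

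The one step where you deviate in substance is the upgrade from ``stably birational'' to ``isomorphic'' for the positive-genus models, and there your primary argument has a genuine flaw. Birational invariance of the Albanese variety gives an isomorphism $\mathrm{Jac}(\bar X_i)\cong \mathrm{Jac}(\bar Y_j)$ of \emph{abstract} abelian varieties only; nothing guarantees that this isomorphism respects the canonical principal polarizations given by the theta divisors. Torelli's theorem (Theorem \ref{thm:torelli}) requires the polarized isomorphism, and this is not a pedantic point: an abelian variety can carry inequivalent principal polarizations, and there exist non-isomorphic smooth projective complex curves of genus $\geq 2$ whose Jacobians are isomorphic as unpolarized abelian varieties. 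So ``stably birational curves have isomorphic Jacobians, whence Torelli recovers the curve'' fails as stated (it is fine for genus $1$, where the Jacobian is the curve itself). Your parenthetical alternative is \emph{not} equivalent to it, but it is the correct argument: for $g(\bar X_i)\geq 1$ every rational map from a rational variety to $\bar X_i$ is constant, so the maximal rationally connected quotient of $\bar X_i\times \mathbb{P}^n$ is $\bar X_i$, and this quotient is a birational invariant; hence stably birational curves of positive genus are birational, so isomorphic. This is also essentially what the paper does, since its appeal to \cite[Theorem 2]{Liu-Sebag} is precisely a cancellation statement of this kind for non-uniruled varieties. With the maximal-rationally-connected-quotient argument kept and the Jacobian variant discarded, your proof is complete and correct.
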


\begin{proof}
The proof below follows the arguments of the proof of \cite[Proposition 6]{Liu-Sebag}. Since the statement only depends on the reduced scheme structures of $X$ and $Y$, we can suppose that $X$ and $Y$ are reduced. Assume first that the dimension of $X$ is one. By Lemma \ref{dimension} the dimension of $Y$ is equal to 1 too. Let $\bar{X}_1,\ldots,\bar{X}_n$ (resp. $\bar{Y}_1,\ldots, \bar{Y}_n$) be smooth irreducible projective models of the irreducible components $X_1,\ldots,X_n$ (resp. $Y_1,\ldots, Y_n$) of $X$ (resp. of $Y$). So, the relation $[X]=[Y]$ can be transformed,  in $\KZ$, in the following one:
$$
\sum_{i=1}^n[\bar{X}_i]=\sum_{i=1}^n[\bar{Y}_i]+m,
$$
with $m\in\mathbb{Z}$. By applying the morphism $\mathrm{SB}$, we conclude that $m=0$, and, up to renumbering, that $\mathrm{SB}(\bar{X}_i)=\mathrm{SB}(\bar{Y}_i)$. Thus $\bar{X}_i,\bar{Y}_i$ are stably birational, for any $i$, $1\leq i\leq n$. Let us fix $i$, $1\leq i\leq n$. From a birational map $\bar{X}_i\times_k\mathbb{P}^s_k\dasharrow \bar{Y}_i\times_k \mathbb{P}^s_k$, we deduce, if $\bar{X}_i$ is rational (i.e., $k$-isomorphic to $\mathbb{P}^1_k$, because of the assumption on $k$), the existence of a dominant morphism of $k$-schemes $\mathbb{P}^1_k\rightarrow \bar{Y}_i$. Thus, by L\"uroth Problem, we conclude that $\bar{Y}_i$ is $k$-isomorphic to $\mathbb{P}^1_k$, and that $X_i,Y_i$ are birationally equivalent. If $X_i$ is not rational, the symmetry of the argument proves that $Y_i$ is not rational. By \cite[Theorem 2]{Liu-Sebag}, there exists an isomorphism of $k$-schemes $\bar{X}_i\rightarrow \bar{Y}_i$. So, $
X_i,Y_i$ are again birationally equivalent. These remarks imply the existence of dense open subvarieties $U$ of $X$ and $V$ of $Y$, and of a $k$-isomorphism $f_1\colon U\rightarrow V$. The complements of $U$ in $X$ and of $V$ in $Y$ are disjoint sums of points. A (non canonical) $k$-isomorphism $f_0\colon (X\setminus U)_{\mathrm{red}}\rightarrow (Y\setminus V)_{\mathrm{red}}$ exists if and only if the cardinals of the underlying sets of $(X\setminus U)_{\mathrm{red}}$ and $(Y\setminus V)_{\mathrm{red}}$ are equal. But this is the case thanks to the equality $\mathrm{Card}((X\setminus U)_{\mathrm{red}})=\chi_c([(X\setminus U)_{\mathrm{red}}])=\chi_c([(Y\setminus V)_{\mathrm{red}}])=\mathrm{Card}((X\setminus U)_{\mathrm{red}})$. 

If the dimension of $X$ is zero, then $X,Y$ are disjoint sums of points. We conclude in that case directly by computing Euler characteristics.
\end{proof}

\begin{rema}
Assume that $\mathbb{L}$ is not a zero divisor in $\KZ$. Then it follows from the formula of Lemma \ref{piece weak0} that
$$
\sum_{i=1}^n\sum_{j=1}^{s_i}[C_{ij}][\mathbb{P}^{\alpha_j-2}_k]=\sum_{i=1}^n\sum_{j=1}^{t_i}[D_{ij}][\mathbb{P}^{\beta_j-2}_k].
$$
If all the $C_{ij}, D_{ij}$ are smooth irreducible projective $k$-curves or points, it follows from Proposition \ref{cas des courbes} that there exists a bijection $\sigma$ from the set of the $C_{ij}$ to the set of the $D_{ij}$ such that, for any $C_{ij}$, the curves $C_{ij},\sigma(C_{ij})$ are $k$-isomorphic. See \S \ref{COM} for a discussion around cancellation problems.
\end{rema}

\section{Jacobians}
\label{JI}

\subsection{Principally polarized abelian varieties}
Recall that a \emph{polarization} on a complex torus $V / \Lambda$ is a positive definite hermitian form $\mathcal{H}$ on $V$ such that the imaginary part $\text{Im} \mathcal H$ takes integral values on $\Lambda \times \Lambda$. 
Equivalently a polarization is given by an ample line bundle $L$ on $X$; the first class Chern $c_1(L)$ then corresponds to $\text{Im} \mathcal H$.
A polarization is called \emph{principal} if the Hermitian form is unimodular, or equivalently if $H^0(X,L) = \C$. 
The pair $(X, \mathcal H)$, or equivalently $(X,L)$, is called a \emph{principally polarized abelian variety}. In this situation a divisor associated with a section of $L$ is called a theta divisor and is denoted $\Theta$. 
The datum of $\Theta$ is another way to specify a polarization on $X$. Note that such a divisor $\Theta$ is always reduced. 

A morphism $f\colon (X_1, \mathcal{H}_1) \to (X_2, \mathcal{H}_2)$ between principally polarized abelian varieties is defined as a morphism of $\C$-varieties such that $\mathcal{H}_1 = f^*\mathcal{H}_2$.  
In particular such a morphism is always injective.

For any line bundle $L$ on a principally polarized abelian variety $(X, \Theta)$ we can define a morphism $X \to Pic^0(X)$, $x \mapsto \tau_x^* L \otimes L^{-1}$, where $\tau_x$ is the translation by $x$. The kernel of this morphism is a disjoint union of subtori of $X$; we denote by $K(L)^0$ the irreducible component of the kernel that contains the identity, and by $\pi \colon X \to X/K(L)^0$ the quotient map. Then if $H^0(X,L) \neq 0$, there exists an ample line bundle $\Theta$ on $X/K(L)^0$ such that $\pi^* \Theta = L$ (see \cite[Theorem VI.5.1]{Deb}).

There is a natural notion of product for principally polarized abelian varieties (see \cite[VI.6.1]{Deb}): The product $(X_1, \Theta_1) \times (X_2, \Theta_2)$ is defined as $(X_1 \times X_2, \Theta)$ where $\Theta = p_1^* \Theta_ 1 + p_2^* \Theta_2$, and $p_1, p_2$ are the first and second projections on $X_1 \times X_2$.
We say that a principally polarized abelian variety $(X, \Theta)$ is \emph{indecomposable} if it cannot be written as such a (nontrivial) product.
Then any principally polarized abelian variety $(X, \Theta)$ admits an essentially unique factorization into indecomposable factors, which is directly related to the irreducible components of $\Theta$ (see \cite[VI.9]{Deb} or \cite[Lemma 3.20]{Clemens-Griffiths}):

\begin{theo}\label{thm:factor}
Let $(X, \Theta)$ be a principally polarized abelian variety, and let $D_1, \cdots$, $D_s$ be the irreducible components of $\Theta$. 
For each $k =1,\dots,s$, denote $X_k = X / K(D_k)^0$, $\pi_k\colon X \to X_k$ the projection and $\Theta_k$ a divisor on $X_k$ such that $\pi_k^* \Theta_k = D_k$.
Then 
$$(X, \Theta) = (X_1, \Theta_1) \times \cdots \times (X_s, \Theta_s)$$
is a factorization into indecomposable principally polarized abelian varieties.
Furthermore, if $(X,\Theta) = \prod_{k=1,\dots,s'} (Y_k, \Theta_k')$ is another such factorization, then $s = s'$ and up to a change of index we have $(X_k, \Theta_k)$ isomorphic to $(Y_k, \Theta_k')$ for all $k$.  
\end{theo}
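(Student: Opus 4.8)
The plan is to realise $X$ as a principally polarized abelian subvariety of the product $P := X_1 \times \cdots \times X_s$ and then to force this subvariety to be everything by counting the irreducible components of the theta divisor. First I would produce the factors. For each $k$ the line bundle $\mathcal{O}_X(D_k)$ has a nonzero section (cutting out $D_k$), so by the descent recalled above (\cite[Theorem VI.5.1]{Deb}) there is an ample line bundle $\Theta_k$ on $X_k = X/K(D_k)^0$ with $\pi_k^*\Theta_k = \mathcal{O}_X(D_k)$. The crucial point is that $\Theta_k$ is a \emph{principal} polarization: writing $s_j$ for a section with $\mathrm{div}(s_j) = D_j$, multiplication by $\prod_{j\neq k} s_j$ embeds $H^0(X,\mathcal{O}_X(D_k))$ into $H^0(X,\mathcal{O}_X(\Theta))$, which is one-dimensional since $(X,\Theta)$ is principally polarized; hence $h^0(X,\mathcal{O}_X(D_k)) = 1$, and as $\pi_{k*}\mathcal{O}_X = \mathcal{O}_{X_k}$ also $h^0(X_k,\Theta_k) = 1$, so $(X_k,\Theta_k)$ is a principally polarized abelian variety. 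Moreover $\Theta_k$ is irreducible: a nontrivial splitting $\Theta_k = E_1 + E_2$ would pull back to a nontrivial splitting of the irreducible divisor $D_k = \pi_k^*E_1 + \pi_k^*E_2$ (the fibres of $\pi_k$ are connected), which is impossible. The same argument shows that each $(X_k,\Theta_k)$ is indecomposable, since a factorisation would make $\Theta_k$ reducible.

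Next I would assemble the map $\pi := (\pi_1,\dots,\pi_s)\colon X \to P$. By construction $\pi^*\Theta_P = \sum_k \pi_k^*\Theta_k = \sum_k D_k = \Theta$, where $\Theta_P := \sum_k p_k^*\Theta_k$ is the product principal polarization; thus $\pi$ respects the principal polarizations and is a morphism of principally polarized abelian varieties, so by the injectivity noted in the text it realizes $(X,\Theta)$ as a sub-principally-polarized abelian variety of $(P,\Theta_P)$. Because the restriction of the principal polarization $\Theta_P$ to $X$ is again principal, the polarized form of Poincar\'e's reducibility theorem applies: the associated norm endomorphism of $P$ is then an idempotent, and it yields a splitting $(P,\Theta_P) \cong (X,\Theta) \times (C,\Theta_C)$ of principally polarized abelian varieties.

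It remains to see that the complementary factor $C$ is trivial, and here I would count irreducible components of the (reduced) divisor $\Theta_P$. On the one hand $\Theta_P = \sum_{k=1}^s p_k^*\Theta_k$ has exactly $s$ irreducible components, since each $\Theta_k$ is irreducible and the projections $p_k$ have connected fibres. On the other hand, through the splitting $P \cong X \times C$ we have $\Theta_P = \mathrm{pr}_X^*\Theta + \mathrm{pr}_C^*\Theta_C$, whose components are the $s$ divisors $\mathrm{pr}_X^*D_j$ together with one component for each component of $\Theta_C$. If $C \neq 0$ then $\Theta_C$ is a nonzero effective divisor, contributing at least one extra component, which is absurd. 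Hence $C = 0$ and $\pi\colon (X,\Theta)\xrightarrow{\sim}(X_1,\Theta_1)\times\cdots\times(X_s,\Theta_s)$ is the desired factorisation into indecomposables.

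Finally, for uniqueness I would argue that the factors are intrinsic. Given any factorisation $(X,\Theta) = \prod_{k=1}^{s'}(Y_k,\Theta_k')$ into indecomposables, each $\Theta_k'$ is irreducible (by the existence part, a reducible theta divisor forces a decomposition), so $\Theta = \sum_k \mathrm{pr}_k^*\Theta_k'$ is precisely the decomposition of $\Theta$ into its $s'$ irreducible components; thus $s' = s$ and, after reindexing, $\mathrm{pr}_k^*\Theta_k' = D_k$. One then recovers $\prod_{j\neq k}Y_j = K(D_k)^0$, so that $Y_k \cong X/K(D_k)^0 = X_k$ compatibly with the polarizations, giving $(Y_k,\Theta_k')\cong(X_k,\Theta_k)$. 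I expect the main obstacle to be the splitting step: one must know that an injection of principally polarized abelian varieties splits off as a direct factor with a complementary principal polarization (the polarized Poincar\'e reducibility theorem, via the idempotent norm endomorphism); everything else is then divisor bookkeeping and the one-dimensionality of $H^0(X,\mathcal{O}_X(\Theta))$.
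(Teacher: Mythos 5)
Your proof is correct, but there is nothing in the paper to compare it against: the paper does not prove this theorem, it quotes it from the literature (see the sentence preceding the statement, which points to \cite[VI.9]{Deb} and \cite[Lemma 3.20]{Clemens-Griffiths}). Your argument is in substance the standard proof given in those references. The existence part is assembled correctly: each irreducible component $D_k$ descends to a polarization $\Theta_k$ on $X_k$ by the descent criterion recalled in the paper; your count $h^0(X,\mathcal{O}_X(D_k))=1$, obtained by multiplying with the sections cutting out the other components and using that $h^0(X,\mathcal{O}_X(\Theta))=1$, together with the projection formula along $\pi_k$ (which has connected proper fibres), does show that each $(X_k,\Theta_k)$ is principally polarized, irreducible, hence indecomposable; the map $\pi=(\pi_1,\dots,\pi_s)$ pulls the product polarization back to $\Theta$, hence is injective by the fact recorded in the paper; and the complementary factor $C$ is killed by counting irreducible components of the theta divisor, which is a legitimate invariant because $h^0=1$ makes the theta divisor unique up to translation. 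The step you rightly single out as the crux --- that an abelian subvariety on which a principal polarization restricts to a principal polarization splits off as a direct factor \emph{of principally polarized abelian varieties} --- is a genuine theorem: with $\iota^*\Theta_P$ principal the norm endomorphism $N_X$ is a true idempotent, any $x\in X\cap C$ satisfies $x=N_X(x)=0$ so the sum map $X\times C\to P$ is an isomorphism, and $C$ is $\mathcal{H}$-orthogonal to $X$, which is exactly why the polarization itself (not merely the abelian variety up to isogeny) decomposes as a product; this is proved in the same sources and is independent of the decomposition theorem, so your argument is not circular. Your uniqueness argument, deducing irreducibility of each $\Theta_k'$ from the existence part and then identifying $K(D_k)^0=\prod_{j\neq k}Y_j$ because $K(\Theta_k')=0$ for a principal polarization, is likewise the standard one and is correct.
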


\subsection{Intermediate Jacobians} In this paragraph, we quickly overview the theory of intermediate Jacobians, following  \cite{Clemens-Griffiths} and \cite{Voisin}. \\

Let $X$ be a smooth irreducible projective $\C$-variety. 
For $n\in\mathbb{N}$ such that $2n-1 \le \dim X$, consider the $\mathbb{C}$-vector space
$$V_n = H^{n-1,n}(X) \times H^{n-2,n+1}(X) \times \dots \times H^{0,2n-1}(X).$$
Then the \emph{$n$th intermediate Jacobian} of $X$ is defined as the complex torus:
$$
J_n(X) = V_n/H^{2n-1}(X, \Z),
$$
where $H^{2n-1}(X,\Z)$ is identified to its image via the group morphism $\iota \colon H^{2n-1}(X,\Z)$ $\rightarrow V_n$, which is obtained by composing the canonical morphism $H^{2n-1}(X,\Z)\rightarrow H^{2n-1}(X, \C)$ and the projection $H^{2n-1}(X, \C) \rightarrow V_n$. 
In general $J_n(X)$ has no reason to define an abelian variety.\\

First we specialize to the case where $\dim X = 3$ and $n = 2$. 
Consider the hermitian form defined on $V_2$ by
$$\mathcal H(\alpha, \beta) = i \int_X \alpha \wedge \overline{\beta}.$$
Recall that if $\omega \in H^{1,1}(X)$ is the K\"ahler form induced by the ambient Fubini-Study metric, the Lefschetz operator $L\colon H^k(X) \to H^{k+2}$ is defined by $L(\alpha) = \omega \wedge \alpha$, and the primitive subspace $H^{p,q}_{\text{prim}}(X)$ is defined as the kernel of the restriction of $L^{4 - p - q}$ to $H^{p,q}(X)$. 
One can show that $V_2$ admits a decomposition
$$V_2 = H^{1,2}_{\text{prim}}(X) \times L H^{0,1}(X) \times H^{0,3}(X),$$
and that $\mathcal H$ is positive definite on $H^{1,2}_{\text{prim}}(X)$ and negative definite  on $L H^{0,1}(X) \times H^{0,3}(X)$.
If we assume 
$$H^{0,1}(X) = H^{0,3}(X) = 0,$$
we obtain a principal polarization on $J(X) = J_2(X)$ (see \cite[Theorem 6.32]{Voisin} or \cite[Lemma 3.4]{Clemens-Griffiths}). 
Remark that threefolds which are complete intersections gives examples of varieties satisfying $H^{0,1}(X) = H^{0,3}(X) = 0$.\\

If instead we specialize to the case where $\dim X = 1$ and $n = 1$, we recover the usual Jacobian $J(X) = J_1(X)$ of a curve $X$. 
In this case a canonical principal polarization comes from the hermitian metric defined on $H^{0,1}(X)$ by
$$\mathcal H (\alpha,\beta) = -i \int_X \alpha \wedge \overline{\beta}.$$
Recall the classical result of Torelli which says that we can recover a curve from its principally polarized Jacobian (see \cite[p.~359]{Griffiths-Harris}):

\begin{theo}\label{thm:torelli}
Let $X,X'$ be two smooth irreducible projective $\C$-curves. If their Jacobians $(J(X), \mathcal H)$ and $(J(X'), \mathcal H')$ are isomorphic as principally polarized varieties, then $X$ is isomorphic to $X'$.
\end{theo}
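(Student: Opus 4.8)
The statement is the classical Torelli theorem, and the plan is to reconstruct the curve from the geometry of the theta divisor of its Jacobian by means of the Gauss map, in the spirit of Andreotti. The guiding principle is that every object I build must depend only on the principally polarized abelian variety $(J(X), \mathcal H)$, equivalently on $(J(X), \Theta)$ with $\Theta$ determined up to translation by the polarization, so that recovering $X$ from these intrinsic data settles the theorem.

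First I would dispose of the small genus cases $g := \dim J(X) \le 2$. For $g \in \{0,1\}$ the statement is immediate, since $X \cong \Pro^1_\C$ when $g = 0$ and $J(X)$ is canonically isomorphic to $X$ when $g = 1$. For $g = 2$ the theta divisor $\Theta = W_1$ is the image of the Abel--Jacobi embedding of $X$ into $J(X)$, so $X \cong \Theta$ is read off directly from the ppav. I may therefore assume $g \ge 3$. By Riemann's theorem I identify $\Theta$, up to translation, with the locus $W_{g-1} \subset \mathrm{Pic}^{g-1}(X) \cong J(X)$ of effective divisor classes of degree $g-1$. The Riemann singularity theorem gives $\mathrm{mult}_{[L]} W_{g-1} = h^0(X, L)$, so that $\Theta$ is singular precisely along the Brill--Noether locus $W^1_{g-1}$, which has codimension at least three in $\Theta$; in particular the smooth locus $\Theta_{\mathrm{sm}}$ is dense.

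Next I would construct and compute the Gauss map. Writing $J(X) = V / \Lambda$ with $V = H^{0,1}(X)$, translation invariance canonically identifies every tangent space of $J(X)$ with $V$, so a point $x \in \Theta_{\mathrm{sm}}$ has a well-defined tangent hyperplane $T_x \Theta \subset V$, and the assignment $x \mapsto [T_x \Theta]$ defines a morphism
$$\mathcal G \colon \Theta_{\mathrm{sm}} \longrightarrow \Pro(V^*) = \Pro(H^0(X, \Omega^1_X)) = \Pro^{g-1}.$$
A local computation identifies the differential of the Abel--Jacobi map with the canonical map $\phi \colon X \to \Pro(V)$, and shows that $\mathcal G$ sends a general class $[p_1 + \cdots + p_{g-1}]$ of $\Theta$ to the point of $\Pro(V^*)$ corresponding to the hyperplane of $\Pro(V)$ spanned by the canonical images $\phi(p_1), \dots, \phi(p_{g-1})$.

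Finally I would read off the curve from the branch locus. The branch divisor of $\mathcal G$ consists exactly of those hyperplanes tangent to the canonical curve $C := \phi(X) \subset \Pro(V)$, that is, it is the dual variety $C^* \subset \Pro(V^*)$. Since $\Theta$ and $\mathcal G$ were manufactured from $(J(X), \Theta)$ alone, the hypersurface $C^*$ is intrinsic to the ppav; by biduality over $\C$ one has $(C^*)^* = C$, so the canonical curve $C$ is recovered, and in the non-hyperelliptic case $X \cong C$. The hyperelliptic case must be handled separately: there $\phi$ is a two-to-one cover of a rational normal curve, and instead of the dual variety one recovers the $2g+2$ branch points of the hyperelliptic pencil, which determine $X$. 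I expect the main obstacle to be precisely this branch-locus analysis --- identifying the ramification of the Gauss map with the dual variety of the canonical curve, controlling the indeterminacy of $\mathcal G$ along $\mathrm{Sing}\,\Theta$, and treating the hyperelliptic curves by a genuinely different argument.
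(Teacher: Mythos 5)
The paper does not actually prove this statement: it quotes Torelli's theorem as a classical result and cites Griffiths--Harris (p.~359), whose proof is precisely the Andreotti-style Gauss-map argument you outline --- reconstruct the canonical curve as the bidual of the branch divisor of the Gauss map on $\Theta_{\mathrm{sm}}$, treating hyperelliptic curves separately --- so your sketch follows essentially the same route as the paper's cited source. The only slip is the claim that $\mathrm{Sing}\,\Theta = W^1_{g-1}$ has codimension at least three in $\Theta$, which fails for hyperelliptic curves (there $\dim W^1_{g-1} = g-3$, i.e.\ codimension two), but this is immaterial: density of $\Theta_{\mathrm{sm}}$ already follows from $\Theta$ being reduced and irreducible.
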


Note also that for a curve $X$, a theta divisor on $J(X)$ is irreducible, because it is the image (up to translation) of the $(g-1)$-symmetric product of $X$ under a natural morphism $X^{(g-1)} \to J(X)$ (see \cite[p.~338]{Griffiths-Harris}). In particular $J(X)$ is indecomposable by Theorem \ref{thm:factor}.  

\subsection{Blow-ups}

For our purposes it is important to understand how the intermediate Jacobian of a threefold is affected under the blow-up of a smooth center.
First, at the level of the cohomology, we have the following fact (this is a particular case of \cite[Theorem 7.31]{Voisin}):

\begin{prop}
Let $X$ be a smooth irreducible projective $\mathbb{C}$-variety of dimension 3, and let $Z \to X$ be the blow-up of a smooth irreducible center $C$.
\begin{enumerate}
\item If $C$ is a point, then we have canonical isomorphisms of $\mathbb{C}$-vector spaces
\begin{align*}
H^{1,1}(Z) &\cong H^{1,1}(X) \oplus \C;\\
H^{p,q}(Z) &\cong H^{p,q}(X) \text{ for } (p,q) \neq (1,1).
\end{align*}
\item If $C$ is a curve, then we have canonical isomorphisms of $\mathbb{C}$-vector spaces
\begin{align*}
H^{1,1}(Z) &\cong H^{1,1}(X) \oplus \C;\\
H^{2,1}(Z) &\cong H^{2,1}(X) \oplus H^{1,0}(C);\\ 
H^{1,2}(Z) &\cong H^{1,2}(X) \oplus H^{0,1}(C);\\ 
H^{p,q}(Z) &\cong H^{p,q}(X) \text{ for other } (p,q).
\end{align*}
\end{enumerate}
\end{prop}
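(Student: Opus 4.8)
The plan is to obtain both cases as a direct specialization of the general formula for the Hodge structure on the cohomology of a smooth blow-up, which is precisely the content of \cite[Theorem 7.31]{Voisin} invoked in the statement; the remaining work is purely to read off Hodge pieces. First I would fix notation. Write $r = \mathrm{codim}_X(C)$, so that $r = 3$ when $C$ is a point and $r = 2$ when $C$ is a curve. Let $\pi\colon Z \to X$ be the blow-up, $E = \pi^{-1}(C) = \mathbb{P}(N_{C/X})$ its exceptional divisor, $j\colon E \hookrightarrow Z$ the inclusion and $p\colon E \to C$ the structural projection, which realizes $E$ as a $\mathbb{P}^{r-1}$-bundle over $C$; set $h = c_1(\mathcal{O}_E(1)) \in H^{1,1}(E)$. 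The blow-up formula provides, for each $k$, an isomorphism of pure Hodge structures
\[
H^k(Z) \;\cong\; \pi^* H^k(X)\ \oplus\ \bigoplus_{i=1}^{r-1} H^{k-2i}(C)(-i),
\]
in which the $i$-th summand is the image of $H^{k-2i}(C)$ under the map $\alpha \mapsto j_*\!\left(h^{\,i-1}\cup p^*\alpha\right)$, the Tate twist $(-i)$ recording that this map raises the Hodge bidegree by $(i,i)$.

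Next I would pass to Hodge components. Since $p^*$ preserves bidegree, cup product with the $(1,1)$-class $h$ raises it by $(1,1)$, and the Gysin pushforward $j_*$ along the smooth divisor $E$ raises it by $(1,1)$, the map defining the $i$-th summand sends $H^{a,b}(C)$ into $H^{a+i,\,b+i}(Z)$. Decomposing each $H^{k-2i}(C)$ into its $(a,b)$-parts therefore yields the clean, canonical formula
\[
H^{p,q}(Z) \;\cong\; H^{p,q}(X)\ \oplus\ \bigoplus_{i=1}^{r-1} H^{p-i,\,q-i}(C),
\]
which is canonical because each of $\pi^*$, $p^*$, cup with $h$ and $j_*$ is.

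It then remains to specialize. For a point ($r=3$) one has $H^{a,b}(\mathrm{pt}) = \C$ exactly for $(a,b)=(0,0)$, so $H^{p-i,\,q-i}(C)$ is nonzero only when $p=q=i$; the index $i=1$ contributes a single copy of $\C$ to $H^{1,1}(Z)$, and all groups $H^{p,q}(Z)$ with $(p,q)\neq(1,1)$ agree with $H^{p,q}(X)$, as stated (the symmetric copy of $\C$ in $H^{2,2}(Z)$ coming from $i=2$ is the Serre-dual counterpart of the $(1,1)$-contribution and plays no role for the intermediate Jacobian). For a curve ($r=2$) only $i=1$ survives, so $H^{p,q}(Z) \cong H^{p,q}(X) \oplus H^{p-1,\,q-1}(C)$; reading off the nonzero Hodge groups $H^{0,0}(C)$, $H^{1,0}(C)$, $H^{0,1}(C)$ of the curve gives exactly the displayed isomorphisms in bidegrees $(1,1)$, $(2,1)$ and $(1,2)$, while $H^{p,q}(Z)\cong H^{p,q}(X)$ in the remaining bidegrees (again up to the dual $(2,2)$-contribution coming from $H^{1,1}(C)$).

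The computation is essentially bookkeeping once the blow-up formula is granted, so there is no genuine obstacle; the single point requiring care is the determination of the Tate twist on each summand, i.e. the verification that $j_*\!\left(h^{\,i-1}\cup p^*(-)\right)$ shifts the Hodge bidegree by exactly $(i,i)$. I would either check this directly from the bidegree behaviour of $p^*$, cup with $h$ and $j_*$ as indicated above, or simply appeal to the fact that \cite[Theorem 7.31]{Voisin} already delivers the decomposition as an isomorphism of \emph{pure Hodge structures}, which encodes the twists. The emphasis throughout is on the pieces $H^{2,1}$ and $H^{1,2}$ of $H^3$, since these (together with the lattice $H^3(Z,\Z)$) are what govern the intermediate Jacobian used in the sequel.
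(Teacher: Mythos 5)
Your proof is correct and takes essentially the same route the paper intends: the paper offers no argument beyond the citation of Voisin's Theorem 7.31, and your derivation is precisely the explicit Hodge-bidegree bookkeeping of that blow-up formula, $H^{p,q}(Z) \cong H^{p,q}(X) \oplus \bigoplus_{i=1}^{r-1} H^{p-i,q-i}(C)$. Your parenthetical remarks about the extra summand $\C$ in $H^{2,2}(Z)$ are well taken --- as literally stated the proposition overlooks this $(2,2)$-contribution (present in both the point and curve cases) --- and you correctly observe that it is harmless for the sequel, since the intermediate Jacobian only depends on $H^3$, i.e.\ on the pieces $H^{2,1}$ and $H^{1,2}$ together with the lattice $H^3(Z,\Z)$.
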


Note in particular that the condition $H^{0,3}(X) = H^{0,1}(X) = 0$ is preserved under such blow-ups. 
Now at the level of the Jacobians we have (see \cite[Lemma 3.11]{Clemens-Griffiths}):

\begin{prop}\label{prop:blowup}
Let $X$ be a smooth irreducible projective $\mathbb{C}$-variety of dimension 3, with $H^{0,3}(X) = H^{0,1}(X) = 0$, and let $Z \to X$ be the blow-up of a smooth irreducible center $C$.
\begin{enumerate}
\item If $C$ is a point or a rational curve, then $J(Z) \cong J(X)$.
\item If $C$ is a non rational curve, then $J(Z) \cong J(X) \times J(C)$.
\end{enumerate}
These are isomorphisms of principally polarized abelian varieties, where $J(X)$, $J(Z)$ and $J(C)$ are equipped with their canonical polarizations defined above.
\end{prop}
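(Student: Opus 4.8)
The plan is to read off $J(Z)$ as a complex torus directly from the cohomological blow-up formula just established, and then to promote the resulting isomorphism to one of principally polarized abelian varieties by controlling the cup-product pairing. Recall that for a smooth projective threefold $W$ with $H^{0,1}(W)=H^{0,3}(W)=0$ one has $V_2(W)=H^{1,2}(W)$, the defining lattice is $H^3(W,\Z)$, and the principal polarization is induced by $\mathcal H(\alpha,\beta)=i\int_W\alpha\wedge\overline\beta$. As observed just after the cohomological proposition, the vanishing $H^{0,1}=H^{0,3}=0$ is preserved under the blow-up, so $J(Z)$ is again a principally polarized abelian variety, and the whole problem is to compare the three polarized data.

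I would first dispose of the cases yielding $J(Z)\cong J(X)$. By the cohomological formula above, blowing up a point or a rational curve leaves $H^{1,2}$ and $H^{0,3}=0$ unchanged, using $H^{0,1}(\mathbb{P}^1_\C)=0$; hence $V_2(Z)=\pi^*V_2(X)$. The standard integral blow-up formula $H^3(Z,\Z)\cong H^3(X,\Z)\oplus H^1(C,\Z)(-1)$ has vanishing second summand in both cases (it is absent for a point and zero for $C=\mathbb{P}^1_\C$), so $\pi^*\colon H^3(X,\Z)\to H^3(Z,\Z)$ is an isomorphism of lattices. Since $\pi$ is birational we have $\pi_*\pi^*=\mathrm{id}$ and therefore $\int_Z\pi^*\alpha\wedge\pi^*\beta=\int_X\alpha\wedge\beta$, so $\pi^*$ is an isometry and $\mathcal H$ is preserved; thus $J(Z)\cong J(X)$ as principally polarized abelian varieties.

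For a non-rational curve $C$ I would again invoke the integral blow-up formula, now read as an isomorphism of integral Hodge structures
$$H^3(Z,\Z)\cong H^3(X,\Z)\oplus H^1(C,\Z)(-1).$$
Its $(1,2)$-graded piece recovers $H^{1,2}(Z)\cong H^{1,2}(X)\oplus H^{0,1}(C)$ of the cohomological proposition, that is $V_2(Z)\cong V_2(X)\oplus V_1(C)$ with $V_1(C)=H^{0,1}(C)$ the datum defining $J(C)$; the Tate twist by $(-1)$ is precisely what places $H^{0,1}(C)$ in bidegree $(1,2)$. Since $H^1(C,\Z)\hookrightarrow H^{0,1}(C)$ is exactly the lattice defining $J(C)$, passing to the quotient by the compatible decomposition of lattices yields the isomorphism of complex tori $J(Z)\cong J(X)\times J(C)$.

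The main obstacle is to check that this last splitting respects polarizations, i.e. that it is an \emph{orthogonal direct sum of polarized Hodge structures}. As above, $\pi^*H^3(X)$ is isometric to $H^3(X)$ via $\pi_*\pi^*=\mathrm{id}$. The exceptional summand is spanned by classes of the form $j_*(h\cup p^*\gamma)$ with $\gamma\in H^1(C,\Z)$, where $j\colon E\hookrightarrow Z$ is the inclusion of the exceptional divisor $E=\mathbb{P}(N_{C/X})$, $p\colon E\to C$ the projection, and $h$ the relative hyperplane class; a projection-formula computation on the $\mathbb{P}^1$-bundle $E$ shows these classes are orthogonal to $\pi^*H^3(X)$ and that the cup product restricted to them reproduces the intersection form of $C$. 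The delicate point is the sign: the canonical polarization of $J(C)$ is normalized by $-i\int_C$ whereas that of the threefolds uses $+i\int_X$, and it is exactly the Tate twist $(-1)$ in the blow-up formula that flips this sign and reconciles the two conventions, so that $\mathcal H$ restricts on the exceptional summand to the canonical polarization of $J(C)$. Granting this, $(J(Z),\mathcal H)$ is isomorphic to the product $(J(X),\mathcal H_X)\times(J(C),\mathcal H_C)$ of principally polarized abelian varieties.
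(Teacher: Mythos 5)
Your proposal is doing real work that the paper itself does not: the paper gives no proof of this proposition, it simply cites \cite[Lemma 3.11]{Clemens-Griffiths}. What you wrote is essentially the standard argument behind that citation. Your case (1) is complete and correct: the integral blow-up formula makes $\pi^*\colon H^3(X,\Z)\to H^3(Z,\Z)$ an isomorphism of lattices (the summand $H^1(C,\Z)$ being absent for a point and zero for $C\cong \mathbb{P}^1_\C$), and $\pi_*\pi^*=\mathrm{id}$ plus the projection formula makes $\pi^*$ an isometry for the cup-product, hence for $\mathcal H$. In case (2), the splitting of $J(Z)$ as a complex torus is also fine, as is the orthogonality of $\pi^*H^3(X)$ with the exceptional summand: from $\pi\circ j=\iota\circ p$ (with $\iota\colon C\to X$ the inclusion) one gets $j^*\pi^*\alpha=p^*\iota^*\alpha$, and $H^3(C)=0$.

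There are, however, two concrete problems, located exactly at the point you yourself call delicate. First, the exceptional summand of $H^3(Z,\Z)$ is the image of $\gamma\mapsto j_*(p^*\gamma)$, not of $\gamma\mapsto j_*(h\cup p^*\gamma)$: the Gysin map $j_*$ already raises degree by $2$, so your classes would live in $H^5(Z,\Z)$. Second, and more seriously, the sign cannot be ``granted'' to the Tate twist. The blow-up formula is an isomorphism of Hodge structures, and as such it carries no information about how the intersection form of $Z$ restricts to the exceptional summand; the twist $(-1)$ is weight bookkeeping, not a statement about forms. The sign comes from geometry, namely the self-intersection formula $j^*j_*\eta=c_1(\mathcal{O}_E(E))\cup\eta=-h\cup\eta$, which together with $p_*(h)=1$ gives
$$
\int_Z j_*(p^*\gamma)\wedge j_*(p^*\delta)=\int_E (-h)\wedge p^*\gamma\wedge p^*\delta=-\int_C\gamma\wedge\delta .
$$
It is this minus sign that converts the threefold convention $\mathcal H(\alpha,\beta)=i\int_Z\alpha\wedge\overline{\beta}$ into the curve convention $-i\int_C\alpha\wedge\overline{\beta}$, so that the exceptional factor carries exactly the canonical principal polarization of $J(C)$. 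Once you fix the degree error and replace the Tate-twist assertion by this computation, your proof is complete and coincides with the argument of \cite[Lemma 3.11]{Clemens-Griffiths}.
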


\subsection{Consequences} For our main result, Theorem \ref{le theoreme}, we are mainly interested in varieties of dimension 3 over $\C$. In this context, we have the following result.

\begin{prop}
\label{jacobienne}
Let $X$ be a smooth irreducible projective $\mathbb{C}$-variety of dimension 3 whose Hodge numbers $h^{3,0}(X), h^{1,0}(X)$ both vanish. Let $f\colon X\dasharrow X$ be a birational map. Let 
$$
\xymatrix{&Z_1\ar[dl]_{b_1}\ar[dr]^{c_1}&&&&Z_n\ar[dl]_{b_n}\ar[dr]^{c_n}&\\
X&&X_2&\ldots&X_n&&X}
$$
be a weak factorization of $f$. For each $1\leq i\leq n$, let $\mathcal{C}_{b_i}$ (resp. $\mathcal{C}_{c_i}$) be the (finite) set of all the blow-ups $\pi_C$, with center a smooth irreducible non rational $\mathbb{C}$-curve $C$, appearing in the expression of the birational morphisms $b_i$ (resp. $c_i$). 
Then there exists an isomorphism of principally polarized abelian varieties over $\mathbb{C}$
$$
J(X)\times\left(\prod_{\pi_C\in \cup_{i = 1}^n\mathcal{C}_{b_i}} J(C)\right)\simeq J(X)\times\left(\prod_{\pi_{C'}\in \cup_{i = 1}^n\mathcal{C}_{c_i}} J(C')\right).
$$
In particular, we have the following properties:
\begin{enumerate}[a\emph{)}]
\item The sets $\mathcal{C}_b := \cup_{i = 1}^n\mathcal{C}_{b_i}$ and $\mathcal{C}_c := \cup_{i = 1}^n\mathcal{C}_{c_i}$ have same cardinality;
\item There exists a bijection $\sigma\colon \mathcal{C}_b\rightarrow \mathcal{C}_c$ such that, for any $\pi_C$, $\pi_C\in\mathcal{C}_b$, the center $C$ of $\pi_C$ is isomorphic over $k$ to the center of $\sigma(\pi_C)$.
\end{enumerate}
\end{prop}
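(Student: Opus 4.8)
The plan is to reduce everything to the behaviour of intermediate Jacobians under blow-ups (Proposition \ref{prop:blowup}) and then to exploit the essentially unique factorization of principally polarized abelian varieties (Theorem \ref{thm:factor}). First I would check that the hypothesis on $X$ propagates along the whole factorization: by Hodge symmetry $h^{0,1}(X)=h^{1,0}(X)=0$ and $h^{0,3}(X)=h^{3,0}(X)=0$, so $H^{0,1}(X)=H^{0,3}(X)=0$ and $J(X)$ carries its canonical principal polarization; moreover this vanishing is preserved under blow-ups of smooth centers (as noted just before Proposition \ref{prop:blowup}), so every threefold occurring in the factorization --- each $X_i$, each $Z_i$, and each intermediate threefold inside the composites $b_i,c_i$ --- satisfies the same condition, and Proposition \ref{prop:blowup} applies at every blow-up.

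Next, applying Proposition \ref{prop:blowup} successively to the blow-ups composing $b_i$ (resp. $c_i$), the point and rational-curve centers contribute nothing while each non-rational-curve center $C$ multiplies the Jacobian by $J(C)$. Writing $B_i=\prod_{\pi_C\in\mathcal{C}_{b_i}}J(C)$ and $B_i'=\prod_{\pi_{C'}\in\mathcal{C}_{c_i}}J(C')$, this yields isomorphisms of principally polarized abelian varieties
$$J(X_i)\times B_i\;\simeq\;J(Z_i)\;\simeq\;J(X_{i+1})\times B_i'\qquad(1\le i\le n),$$
where $X_1=X_{n+1}=X$. I would then multiply these $n$ relations together and telescope: the left-hand side becomes $\bigl(\prod_{i=1}^{n}J(X_i)\bigr)\times\prod_i B_i$ and the right-hand side becomes $\bigl(\prod_{i=2}^{n+1}J(X_i)\bigr)\times\prod_i B_i'$. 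Cancelling the common intermediate factor $\prod_{i=2}^{n}J(X_i)$ and using $J(X_1)=J(X_{n+1})=J(X)$ gives the desired isomorphism $J(X)\times\prod_{\pi_C\in\mathcal{C}_b}J(C)\simeq J(X)\times\prod_{\pi_{C'}\in\mathcal{C}_c}J(C')$.

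The step I expect to be delicate is precisely this cancellation, and it is where Theorem \ref{thm:factor} is indispensable: cancellation of principally polarized abelian varieties is not formal, but it follows from the uniqueness of the factorization into indecomposable factors (decompose both sides, observe that the multisets of indecomposable factors coincide, and delete the factors coming from the variety being cancelled). For the two consequences I would invoke this uniqueness once more, applied to the final isomorphism. Writing $\mathcal{F}$ for the multiset of indecomposable factors of $J(X)$, Theorem \ref{thm:factor} gives the equality of multisets
$$\mathcal{F}\sqcup\{J(C):\pi_C\in\mathcal{C}_b\}=\mathcal{F}\sqcup\{J(C'):\pi_{C'}\in\mathcal{C}_c\},$$
where each $J(C)$ and each $J(C')$ is indecomposable because the theta divisor of a smooth projective curve is irreducible (as recalled after Theorem \ref{thm:torelli}). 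Cancelling $\mathcal{F}$ from both multisets --- now a purely combinatorial cancellation in the free commutative monoid of multisets --- yields $\{J(C):\pi_C\in\mathcal{C}_b\}=\{J(C'):\pi_{C'}\in\mathcal{C}_c\}$; comparing cardinalities proves part a), and the matching bijection $\sigma\colon\mathcal{C}_b\to\mathcal{C}_c$ it provides satisfies $J(C)\simeq J(\sigma(\pi_C))$ as principally polarized varieties, whence the center $C$ is isomorphic to the center of $\sigma(\pi_C)$ by the Torelli theorem (Theorem \ref{thm:torelli}), which is part b).
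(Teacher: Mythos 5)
Your proposal is correct and follows essentially the same route as the paper's proof: apply Proposition \ref{prop:blowup} to each elementary diagram $X_i \leftarrow Z_i \rightarrow X_{i+1}$, multiply the resulting isomorphisms and cancel the intermediate factors $J(X_2),\dots,J(X_n)$ via the uniqueness of decomposition (Theorem \ref{thm:factor}), then deduce a) from that uniqueness and b) from Torelli's Theorem \ref{thm:torelli}. Your additional remarks --- that the vanishing $H^{0,1}=H^{0,3}=0$ propagates along the factorization and that each $J(C)$ is indecomposable because the theta divisor of a curve is irreducible --- are points the paper records just before Proposition \ref{prop:blowup} and just after Theorem \ref{thm:torelli}, so they spell out details the paper's proof uses implicitly rather than constituting a different argument.
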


\begin{proof}

Let us fix $i$, $1\leq i\leq n$, and consider the diagram of blow-ups with irreducible smooth centers
$$
\xymatrix{&Z_i\ar[dl]_{b_i}\ar[dr]^{c_i}&\\
X_i&&X_{i+1}}
$$
coming from the weak factorization of $f$. 
Recall that the morphisms $b_i$ and $c_i$ are finite sequences of blow-ups along irreducible smooth centers. 
By Proposition \ref{prop:blowup}, we know that only blow-ups along non rational curves affect the intermediate Jacobians, and precisely we obtain a canonical isomorphism of principally polarized abelian varieties over $\mathbb{C}$, for each $i$, $1 \le i \le n$:
$$
J(X_i)\times \left(\prod_{\pi_C \in \mathcal C_{b_i}} J(C)\right)
\cong J(Z_i)
\cong J(X_{i+1})\times \left(\prod_{\pi_{C'} \in \mathcal C_{c_i}} J(C')\right).
$$
By taking the product of respectively the left and right-hand sides of these $n$ isomorphisms, and canceling the factors $J(X_2), \dots, J(X_n)$ which appear once on both side (this makes use of Theorem \ref{thm:factor}), we obtain the expected isomorphism of principally polarized abelian varieties over $\mathbb{C}$
\begin{equation}\label{eq:jac}
J(X)\times\left(\prod_{\pi_C\in \mathcal{C}_{b}} J(C)\right)\simeq J(X)\times\left(\prod_{\pi_{C'}\in \mathcal{C}_{c}} J(C')\right).
\end{equation}

Then assertion $a$) follows from (\ref{eq:jac}) and Theorem \ref{thm:factor}; and assertion $b$) follows from $a$) and Torelli's Theorem \ref{thm:torelli}.
\end{proof}

\begin{coro}
\label{corollaire jacobienne}
Let $X$ be a smooth irreducible projective $\mathbb{C}$-variety of dimension 3 whose Hodge numbers $h^{3,0}(X), h^{1,0}(X)$ both vanish. Let $f\colon X\dasharrow X$ be a birational map. Let $$
\xymatrix{&Z_1\ar[dl]_{b_1}\ar[dr]^{c_1}&&&&Z_n\ar[dl]_{b_n}\ar[dr]^{c_n}&\\
X&&X_2&\ldots&X_n&&X}
$$
be a weak factorization of $f$. Let $\tilde{\mathcal{C}}_b$ (resp. $\tilde{\mathcal{C}}_c$) be the (finite) set of all of the blow-ups with center a point or a rational curve, appearing in the expression of any of the birational morphism $b_i$ (resp. $c_i$), $1\leq i\leq n$.
Then there exists a bijection $\tilde{\mathcal{C}}_b\rightarrow \tilde{\mathcal{C}}_c$.
\end{coro}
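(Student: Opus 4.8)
The plan is to reduce the statement to a purely numerical comparison between the total number of blow-ups on each side of the weak factorization and the number of those whose center is a non rational curve, the latter being controlled by Proposition \ref{jacobienne}.

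First I would observe that, since $X$ has dimension $3$, every (non-empty) smooth irreducible center of a blow-up appearing in the $b_i$ or the $c_i$ has codimension at least $2$, hence is either a point (codimension $3$) or a smooth irreducible curve (codimension $2$). Consequently the set of all blow-ups appearing in the $b_i$, $1\le i\le n$, is partitioned into those with center a point or a rational curve---which is exactly $\tilde{\mathcal{C}}_b$---and those with center a non rational curve---which is exactly $\mathcal{C}_b$ in the notation of Proposition \ref{jacobienne}. The same partition holds for the $c_i$, giving $\tilde{\mathcal{C}}_c$ and $\mathcal{C}_c$.

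Next I would apply Corollary \ref{piece weak} to the birational map $f\colon X\dasharrow X$, which is legitimate since $X$ is trivially piecewise isomorphic to itself via the identity. This yields that the total number of (non-empty) blow-ups occurring in the $b_i$ equals the total number occurring in the $c_i$; in the above notation this reads $|\tilde{\mathcal{C}}_b|+|\mathcal{C}_b|=|\tilde{\mathcal{C}}_c|+|\mathcal{C}_c|$. On the other hand, assertion $a$) of Proposition \ref{jacobienne} gives $|\mathcal{C}_b|=|\mathcal{C}_c|$. Subtracting, I conclude that $|\tilde{\mathcal{C}}_b|=|\tilde{\mathcal{C}}_c|$, and since both are finite sets this common cardinality produces the desired bijection $\tilde{\mathcal{C}}_b\rightarrow \tilde{\mathcal{C}}_c$.

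There is no serious obstacle here: the corollary is essentially the arithmetic identity obtained by combining the global count of Corollary \ref{piece weak} with the count of non rational curve centers extracted from the intermediate Jacobian in Proposition \ref{jacobienne}. The only points worth checking are that in dimension $3$ the centers are exactly points or curves, so that the two subsets genuinely partition the full set of blow-ups, and that Corollary \ref{piece weak} does apply in the self-map situation $Y=X$.
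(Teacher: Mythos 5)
Your proof is correct and follows exactly the paper's own argument: the paper likewise combines Proposition \ref{jacobienne} ($|\mathcal{C}_b|=|\mathcal{C}_c|$) with Corollary \ref{piece weak} (equality of the total numbers of blow-ups) and subtracts, using implicitly the dimension-$3$ partition of centers into points, rational curves and non rational curves. Your two verifications---that the identity makes $X$ piecewise isomorphic to itself so Corollary \ref{piece weak} applies, and that in dimension $3$ the two subsets genuinely partition all blow-ups---are exactly the tacit steps of the paper's proof, made explicit.
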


\begin{proof}
This follows from Proposition \ref{jacobienne}, which says that $\mathcal C_b$ and $\mathcal C_c$ have the same cardinality, and from Corollary \ref{piece weak}, which says that $\mathcal C_b \cup \tilde{\mathcal{C}}_b$ and $\mathcal C_c \cup \tilde{\mathcal{C}}_c$ have the same cardinality.
\end{proof}

\begin{coro}
\label{corollaire jacobienne2}
Let $X$ be a smooth irreducible projective $\mathbb{C}$-variety of dimension 3 whose Hodge numbers $h^{3,0}(X), h^{1,0}(X)$ both vanish. Let $f\colon X\dasharrow X$ be a birational map. Let 
$$
\xymatrix{&Z_1\ar[dl]_{b_1}\ar[dr]^{c_1}&&&&Z_n\ar[dl]_{b_n}\ar[dr]^{c_n}&\\
X&&X_2&\ldots&X_n&&X}
$$
be a weak factorization of $f$. Let $C$ be a smooth irreducible projective curve $C$ of genus $g(C)\geq 1$. Then the number of blow-ups, with center isomorphic to $C$, appearing in the $b_i$, $1\leq i\leq n$, is equal to the number of blow-ups, with center isomorphic to $C$, appearing in the $c_i$, $1\leq i\leq n$. 
\end{coro}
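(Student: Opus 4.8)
The plan is to read off the statement from Proposition~\ref{jacobienne}, which already does the substantial work. Recall that $\mathcal{C}_b$ and $\mathcal{C}_c$ there denote the blow-ups whose centers are smooth irreducible \emph{non rational} curves, and that over $\mathbb{C}$ a smooth irreducible projective curve is rational precisely when it has genus $0$. Hence a blow-up whose center is isomorphic to a curve $C$ with $g(C)\geq 1$ automatically belongs to $\mathcal{C}_b$ (resp.\ $\mathcal{C}_c$), and counting the blow-ups among the $b_i$ (resp.\ $c_i$) with center isomorphic to $C$ amounts to counting the elements of $\mathcal{C}_b$ (resp.\ $\mathcal{C}_c$) whose center is isomorphic to $C$.

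With this translation, I would invoke assertion $b$) of Proposition~\ref{jacobienne}: it provides a bijection $\sigma\colon \mathcal{C}_b\rightarrow \mathcal{C}_c$ under which the center of each $\pi\in\mathcal{C}_b$ is $\mathbb{C}$-isomorphic to the center of $\sigma(\pi)$. Since $\sigma$ preserves the isomorphism type of centers, it carries the subset $\{\pi\in\mathcal{C}_b : \text{center}(\pi)\cong C\}$ into $\{\pi\in\mathcal{C}_c : \text{center}(\pi)\cong C\}$, and the same holds for $\sigma^{-1}$; thus $\sigma$ restricts to a bijection between these two finite sets. Their cardinalities therefore coincide, which is exactly the assertion of the corollary.

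For completeness I would also record how this is obtained at the level of abelian varieties, essentially reproving the relevant case of Proposition~\ref{jacobienne}$b$) by hand. Starting from the isomorphism~(\ref{eq:jac}) of principally polarized abelian varieties, I would factor the common factor $J(X)$ into indecomposable principally polarized abelian varieties by Theorem~\ref{thm:factor}, and use that each $J(C)$ with $g(C)\geq 1$ is itself indecomposable (its theta divisor being irreducible). The uniqueness part of Theorem~\ref{thm:factor} then yields an equality of multisets of indecomposable factors on the two sides; cancelling the factors coming from $J(X)$ --- which occur with the same multiplicities on both sides --- gives $\{J(C) : \pi_C\in\mathcal{C}_b\}=\{J(C') : \pi_{C'}\in\mathcal{C}_c\}$ as multisets. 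Finally, Torelli's Theorem~\ref{thm:torelli} turns the identification $J(C)\cong J(C')$ into $C\cong C'$, so this is an equality of multisets of curves, and reading off the multiplicity of a fixed $C$ gives the count.

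The main point to keep in mind --- and the only place any care is needed --- is the legitimacy of cancelling the indecomposable factors contributed by $J(X)$: a factor of $J(X)$ could a priori be isomorphic to some $J(C)$, but this causes no problem because the factorization of $J(X)$ contributes the same multiset to both sides and cancellation of equal multisets is valid with multiplicities. Everything else is a direct bookkeeping consequence of Proposition~\ref{jacobienne} together with the indecomposability of the Jacobian of a curve of positive genus; there is no genuine obstacle beyond these already-established facts.
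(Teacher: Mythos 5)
Your proposal is correct and takes essentially the same route as the paper: the paper's own (very terse) proof is exactly the combination of Proposition \ref{jacobienne}, the uniqueness of the factorization into indecomposable principally polarized abelian varieties (Theorem \ref{thm:factor}), and Torelli's Theorem \ref{thm:torelli}, which is what your second paragraph spells out, including the correct handling of the cancellation of the factors of $J(X)$. Your first paragraph, reading the count directly off the isomorphism-class-preserving bijection of Proposition \ref{jacobienne}~$b$), is just a compressed form of that same argument, so nothing further is needed.
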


\begin{proof}
The result follows from Proposition \ref{jacobienne}, from the unicity of the factorization into indecomposable factors (Theorem \ref{thm:factor}) and from Torelli's Theorem \ref{thm:torelli}. 
\end{proof}

\section{The main theorem}
\label{MAIN}

\subsection{Preliminary results and remarks}

We assume that $k$ is algebraically closed of characteristic zero. Let $X$ be a $k$-variety, of dimension $d\geq 0$, and let $f\colon X\dasharrow X$ be a map that induces an isomorphism $U\cong V$ on dense open subvarieties $U,V$ of $X$.

\subsubsection{Components of maximal dimension}\label{uniregle} By Lemma \ref{dimension}, we see that the $k$-varieties $(X\setminus U)_{\mathrm{red}}$ and $(X\setminus V)_{\mathrm{red}}$ have automatically the same dimension, and the same number of irreducible components of maximal dimension.\\

If $U$ is not the complement of the  exceptional set of $f$, the variety $(X\setminus U)_{\mathrm{red}}$ could contain non-uniruled irreducible components of maximal dimension (see Lemma \ref{unireglage}). But then, the set of non-uniruled irreducible components of maximal dimension of $(X\setminus U)_{\mathrm{red}}$ is automatically in bijection with that of of $(Y\setminus U)_{\mathrm{red}}$, by \cite[Theorem 2]{Liu-Sebag}. In particular, if $U_0$ is the complement of the  exceptional set of $f$, and $V_0:=f(U_0)$, then the set of all the irreducible components of maximal dimension in $U_0 \setminus U$ is in bijection with that of $V_0 \setminus V$.

\subsubsection{State of the art}\label{dimension 2} 

We gather here what was known about Conjecture \ref{la conjecture} before this work.

\begin{enumerate}[$a$)]

\item Let $X$ be a $k$-variety of dimension $d\geq 0$. Assume that $X$ contains only finitely many rational curves. It follows from \cite[Theorem 5]{Liu-Sebag} that Conjecture \ref{la conjecture} holds true. (See also \cite[\S 13]{HK}.)

\item Assume that the dimension of $(X\setminus U)_{\mathrm{red}}$ is at most 1, then Conjecture \ref{la conjecture} holds true for $X$ in that case by Proposition \ref{cas des courbes} (see also \cite[Proposition 6]{Liu-Sebag}).

This remark implies in particular that Conjecture \ref{la conjecture} holds true for any $k$-variety $X$ of dimension 2. 

\item Assume that $X$ is projective, of dimension at least 3, that the projective $k$-varieties $(X\setminus U)_{\mathrm{red}}$, $(X\setminus V)_{\mathrm{red}}$ are smooth surfaces over $k$. Then Conjecture \ref{la conjecture} holds true for $X$ in that case by \cite[Theorem 4]{Liu-Sebag}. 

\item Assume that the $k$-varieties $(X\setminus U)_{\mathrm{red}}$, $(X\setminus V)_{\mathrm{red}}$ have dimension 2, and that their irreducible components of maximal dimension are rational, then Conjecture \ref{la conjecture} holds true for $X$ in that case by \cite{Sebag-PAMS}. One can prove directly this statement by Lemma \ref{dimension} and Proposition \ref{cas des courbes}.

\item  Let $X$ be a smooth irreducible proper $k$-variety with a birational map $f\colon X\dasharrow X$ which admits a weak factorization obtained by blowing-up smooth irreducible projective rational $k$-surfaces, smooth irreducible projective rational $k$-curves or points. Then, if the dimension of $(X\setminus U)_{\mathrm{red}}$ is at most 2, the $k$-varieties $(X\setminus U)_{\mathrm{red}}$ and $(X\setminus V)_{\mathrm{red}}$ are piecewise isomorphic. Indeed, the assumptions implies that each irreducible component of dimension 2 of $(X\setminus U)_{\mathrm{red}}$ or $(X\setminus V)_{\mathrm{red}}$ is rational. We conclude by Lemma \ref{dimension} and Proposition \ref{cas des courbes}.
\end{enumerate}

\subsection{Proof of Main Theorem}

In this paragraph, we prove the following theorem which is the main result of this article.

\begin{theo}
\label{le theoreme}
Let $X$ be a smooth irreducible projective $\C$-variety of dimension 3, whose Hodge numbers $h^{1,0}(X), h^{3,0}(X)$ both vanish. Let $f\colon X\dasharrow X$ be a birational map that induces an isomorphism $U\cong V$ on dense open subvarieties $U,V$ of $X$. Then the $\mathbb{C}$-varieties $(X\setminus U)_{\mathrm{red}},(X\setminus V)_{\mathrm{red}}$ of $X$ are piecewise isomorphic.
\end{theo}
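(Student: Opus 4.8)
The plan is to reduce to the exceptional loci of $f$, to match the two-dimensional (ruled surface) components of the two exceptional sets by their base curves using the intermediate Jacobian, and finally to peel these surfaces down to dimension at most $1$, where the curve case applies.

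First I would reduce to the case where $U$ is the complement $U_0$ of $\mathrm{Exc}(f)$ and $V = V_0 := f(U_0)$. Since $f$ restricts to an isomorphism on $U$, it is a local isomorphism at every point of $U$, so $U \subseteq U_0$ and $V \subseteq V_0$; by Lemma \ref{unireglage}(a) the map $f$ induces an isomorphism $U_0 \cong V_0$ carrying $U_0 \setminus U$ isomorphically onto $V_0 \setminus V$. As $(X \setminus U)_{\mathrm{red}} = (X \setminus U_0)_{\mathrm{red}} \sqcup (U_0 \setminus U)_{\mathrm{red}}$ (and likewise for $V$), it suffices to prove that $D := (X \setminus U_0)_{\mathrm{red}}$ and $D' := (X \setminus V_0)_{\mathrm{red}}$ are piecewise isomorphic. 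Since $[U_0] = [V_0]$ in $\KZC$, we get $[D] = [D']$. Both $D$ and $D'$ have dimension at most $2$; by Lemma \ref{dimension} they have equal dimension and an equal number of components of maximal dimension, and by Lemma \ref{unireglage}(b) every two-dimensional component of $D$ or $D'$ is ruled, hence birational to $\mathbb{P}^1_\mathbb{C} \times C$ for a smooth projective curve $C$ whose isomorphism class, when $g(C) \ge 1$, is a birational invariant of the surface.

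The heart of the argument is to match the two-dimensional components of $D$ and $D'$. Fix a weak factorization of $f$ (Theorem \ref{factorization faible}). By Remark \ref{rem:uniruled}, the two-dimensional components of $D'$ are precisely the strict transforms in $X$ of the non-contracted exceptional divisors of the blow-ups composing the $b_i$, and each such surface has base curve equal to the (curve) center of the corresponding blow-up; applying the same remark to $f^{-1}$ describes the components of $D$ via the $c_i$. The remaining blow-ups—those whose exceptional divisor is contracted—pair off, by Remark \ref{rem:uniruled}(a), a blow-up in some $b_i$ with one in some $c_{i'}$ sharing the same exceptional divisor, and such paired blow-ups have isomorphic centers. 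Hence, for every curve $C$ with $g(C) \ge 1$, subtracting these paired contributions from the equality of Corollary \ref{corollaire jacobienne2} (which counts all blow-ups with center $\cong C$ on each side) shows that the number of two-dimensional components of $D$ with base $\cong C$ equals that of $D'$. The rational two-dimensional components (base $\mathbb{P}^1_\mathbb{C}$) are all mutually birational, and their numbers agree by Lemma \ref{dimension}. This produces a bijection between the two-dimensional components of $D$ and of $D'$ under which matched surfaces are birationally equivalent.

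Given this matching, I would finish as follows. For each matched pair $S \leftrightarrow S'$ I pick dense open subsets $S^\circ \subseteq S$ and $S'^\circ \subseteq S'$ realizing a common birational model, shrunk so that the $S^\circ$ (resp. the $S'^\circ$) are smooth, pairwise disjoint, and disjoint from all other components; then $\bigsqcup S^\circ$ is open in $D$ and $S^\circ \cong S'^\circ$, so these strata already form part of a piecewise isomorphism. The closed complements $\widetilde D := D \setminus \bigsqcup S^\circ$ and $\widetilde{D}{}' := D' \setminus \bigsqcup S'^\circ$ have dimension at most $1$, and since $[S^\circ] = [S'^\circ]$ for each pair and $[D] = [D']$, we get $[\widetilde D] = [\widetilde{D}{}']$ in $\KZC$. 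By Proposition \ref{cas des courbes} the varieties $\widetilde D$ and $\widetilde{D}{}'$ are piecewise isomorphic; assembling this with the surface strata yields a piecewise isomorphism $D \cong D'$, and combining with the isomorphism $U_0 \setminus U \cong V_0 \setminus V$ gives the theorem.

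I expect the main obstacle to be the matching step. The intermediate-Jacobian machinery (Proposition \ref{jacobienne}, Corollary \ref{corollaire jacobienne2}) only controls the centers of the blow-ups occurring in an abstract weak factorization, whereas what is needed is the multiset of base curves of the genuine two-dimensional components of $D$ and $D'$. Bridging this gap—deciding which exceptional divisors survive to $X$ and cancelling the contracted ones in $b_i$–$c_i$ pairs via Remark \ref{rem:uniruled}, so that Torelli's Theorem \ref{thm:torelli} applies to the surfaces rather than to abstract centers—is the delicate point. Everything below dimension $2$ is comparatively routine, as the ruled structure reduces each surface to a common open model and leaves only the curve case already settled in Proposition \ref{cas des courbes}.
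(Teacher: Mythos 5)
Your proposal is correct and takes essentially the same route as the paper's own proof: the same reduction to the complement of the exceptional set and to matching the two-dimensional ruled components up to birational equivalence, the same pairing of contracted exceptional divisors in a weak factorization combined with the intermediate-Jacobian count (Proposition \ref{jacobienne}, Corollary \ref{corollaire jacobienne2}) and with Lemma \ref{dimension} for the rational components, and the same final descent to dimension at most one via Proposition \ref{cas des courbes}. The only cosmetic difference is that the paper concludes the matching by splitting the product formula of Proposition \ref{jacobienne} and invoking the uniqueness of factorization (Theorem \ref{thm:factor}), whereas you subtract the paired blow-ups directly from the equality of Corollary \ref{corollaire jacobienne2}; the two arguments are equivalent.
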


\begin{proof}

First we observe that it is sufficient to prove that the irreducible components of dimension 2 of $(X\setminus U)_{\mathrm{red}}$ and $(X\setminus V)_{\mathrm{red}}$ are mutually birationally equivalent. 
Indeed, if there exist an open dense subscheme $U_1$ of $(X\setminus U)_{\mathrm{red}}$, an open dense subscheme $V_1$ of $(X\setminus V)_{\mathrm{red}}$, and an isomorphism $f_2\colon U_1 \rightarrow V_1$ of $\mathbb{C}$-varieties, then Proposition \ref{cas des courbes} produces a piecewise isomorphism $(f_0,f_1)$ from $((X\setminus U)\setminus U_1)_{\mathrm{red}}$ to $((X\setminus V)\setminus V_1)_{\mathrm{red}}$. The datum of $(f_0, f_1,f_2)$ is a piecewise isomorphism between $(X\setminus U)_{\mathrm{red}}$ and $(X\setminus V)_{\mathrm{red}}$. From now, we will only prove that there exists a bijection $\sigma$ from the set of all the irreducible components of dimension 2 of $(X\setminus U)_{\mathrm{red}}$ to those of $(
X\setminus V)_{\mathrm{red}}$ such that, for any such a component $F$ of $(X\setminus U)_{\mathrm{red}}$, the $\C$-varieties $F$ and $\sigma(F)$ are birationally equivalent.\\

Now the proof proceeds from a precise analysis of the weak factorization of $f$, via Proposition \ref{jacobienne}.
First, by \S \ref{uniregle} we can assume without loss in generality that $U$ is the largest open subset of $X$ on which the restriction of $f$ is an isomorphism. 
In particular all components of dimension 2 in $(X\setminus U)_{\mathrm{red}}$ and $(X\setminus V)_{\mathrm{red}}$ are ruled.

Let 
$$
\xymatrix{&Z_1\ar[dl]_{b_1}\ar[dr]^{c_1}&&&&Z_n\ar[dl]_{b_n}\ar[dr]^{c_n}&\\
X&&X_2&\ldots&X_n&&X}
$$
be a weak factorization of $f$. Let $\eta$ be the generic point of an irreducible component of dimension 2 of $(X\setminus U)_{\mathrm{red}}$. Thanks to our hypothesis, we know that there exists a smooth irreducible projective curve $C_\eta$ such that the residue field $\kappa(\eta)$ of $X$ at $\eta$ is isomorphic to $\kappa(C_\eta)(T)$, the quotient field of the ring of polynomials in $T$ with coefficients in the functions field of $C_\eta$. We also know that such a point must be contracted by a morphism $c_i$, because of our assumption on $U$ and the dimension of $\eta$. If we replace $c_i$ by $b_i$, the generic points $\eta'$ of the irreducible components of dimension 2 of $(X\setminus V)_{\mathrm{red}}$ verify the same properties.\\

So, it is sufficient to prove that for any smooth irreducible projective curve $C$ of genus $g(C)\geq 1$, the number of generic points $\eta$ in $(X\setminus U)_{\mathrm{red}}$ with residue field isomorphic to $\kappa(C)(T)$ is equal to the number of generic points $\eta'$ in $(X\setminus V)_{\mathrm{red}}$ with $\kappa(\eta')\simeq \kappa(C)(T)$. 
Indeed, by Lemma \ref{dimension}, we conclude that the number of rational irreducible components in $(X\setminus U)_{\mathrm{red}}$ will coincide with that in $(X\setminus V)_{\mathrm{red}}$.\\

Let $C$ be a smooth irreducible projective curve $C$ of genus $g(C)\geq 1$. By Corollary \ref{corollaire jacobienne2}, we know that in the weak factorization of $f$, the number of blow-ups with center $C$ appearing in the $b_i$ is equal to those appearing in the $c_i$. 

We define now a numbering on the set of the blow-ups, with center $C$, that appear in the $b_i$ as follows. In $b_1$, consider the first blow-up of this kind. If the strict transform of its exceptional divisor is the exceptional divisor of a blow-up appearing in a $c_j$, we denote it by $b(1)$, and by $c(1)$ the corresponding blow-up in $c_j$. If not, consider the second blow-up in $b_1$ and so on. When we have tested all the blow-ups of this kind in $b_1$, we apply the same procedure to $b_2$ and so on. Then, if it exists, we have found the first numbered blow-up $b(1)$, with center $C$. It determines ipso facto $c(1)$ that is also a blow-up with center $C$. At this step, we search for the second numbered blow-up $b(2)$ such that the strict transform of its exceptional divisor is $k$-isomorphic to the exceptional divisor of a blow-up appearing in a $c_{j'}$, distinct from the blow-up $c(1)$ by construction, and so on.

At the end, we have numbered all the blow-ups with center $C$ appearing in the $b_i$, by $b(\ell)$, $1\leq \ell \leq s$, except a finite number of them which are those whose strict transform are the irreducible components of $(X\setminus V)_{\mathrm{red}}$ birationally equivalent to $C\times_{\mathbb{C}} \mathbb{P}^1_{\mathbb{C}}$. Then we can split the formula in Proposition \ref{jacobienne} as follows
\begin{multline} \label{eq:jacob}
J(X)\times\left(\prod_{\tilde{C}\not=C} J(\tilde{C})\right)\times\left(\prod_{1\leq \ell\leq s} J(C)\right)\times \left(\prod_{\ell\in \Gamma} J(C)\right) \\
\simeq J(X)\times\left(\prod_{\tilde{C}'\not=C} J(\tilde{C}')\right)\times\left(\prod_{1\leq \ell\leq s} J(C)\right)\times \left(\prod_{\ell'\in \Gamma'} J(C)\right)
\end{multline}
where $\Gamma$ is a set in bijection with the (finite) set of irreducible component in $(X\setminus V)_{\mathrm{red}}$, birationally equivalent to $C\times_{\mathbb{C}}\mathbb{P}^1_{\mathbb{C}}$, and where, by symmetry, $\Gamma'$  is a (finite) set in bijection with the set of irreducible component in $(X\setminus U)_{\mathrm{red}}$, birationally equivalent to $C\times_{\mathbb{C}}\mathbb{P}^1_{\mathbb{C}}$. By unicity of the factorization of a principally polarized abelian variety (Theorem \ref{thm:factor}), we deduce from (\ref{eq:jacob}) that $\Gamma$ is in bijection with $\Gamma'$. In other words the numbers of irreducible components birationally equivalent to $C\times_{\mathbb{C}} \mathbb{P}^1_{\mathbb{C}}$ in $(X\setminus U)_{\mathrm{red}}$ and $(X\setminus V)_{\mathrm{red}}$ are equal.
\end{proof}

\begin{rema} 
\label{Gen thm}
The arguments of the proof of Theorem \ref{le theoreme} also imply the following statement. Let $X,Y$ be smooth projective $\mathbb{C}$-varieties of dimension 3 (possibly reducible), with $h^{3,0}(X)=h^{1,0}(X)=h^{3,0}(Y)=h^{1,0}(Y)=0$, and such that there exists an isomorphism $J(X)\cong J(Y)$ of principally polarized abelian varieties over $\mathbb{C}$. If there exists a birational map $f\colon X\dasharrow Y$ that induces an isomorphism of $\mathbb{C}$-schemes $U\rightarrow V$ between dense open subvarieties $U$ of $X$ and $V$ of $Y$, then the $\mathbb{C}$-varieties $(X\setminus U)_{\mathrm{red}}$ and $(Y\setminus V)_{\mathrm{red}}$ are piecewise isomorphic. 
\end{rema}

\begin{coro}
Let $X$ be a smooth irreducible projective $\mathbb{C}$-variety, of dimension 3, whose Hodge numbers $h^{3,0}(X),h^{1,0}(X)$ both vanish. Let $f\colon X\dasharrow X$ be a birational map that induces an isomorphism of $\mathbb{C}$-schemes $U\cong V$ on dense open subvarieties $U,V$ of $X$. Let $A$ be a ring, and $\mu\colon \KZ\rightarrow A$ be a motivic measure. Then
$$
\mu([(X\setminus U)_{\mathrm{red}}])=\mu([(X\setminus V)_{\mathrm{red}}]).
$$
In particular, the projective $\mathbb{C}$-varieties $(X\setminus U)_{\mathrm{red}},(X\setminus V)_{\mathrm{red}}$ have the same Hodge--Deligne polynomial, the same Poincar\'e polynomial, the same (topological) Euler characteristic, and $\mathrm{SB}([(X\setminus U)_{\mathrm{red}}])=\mathrm{SB}([(X\setminus V)_{\mathrm{red}}])$.
\end{coro}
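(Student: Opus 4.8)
The plan is to deduce this corollary directly from the Main Theorem (Theorem \ref{le theoreme}), which asserts that under the stated hypotheses the $\mathbb{C}$-varieties $(X\setminus U)_{\mathrm{red}}$ and $(X\setminus V)_{\mathrm{red}}$ are piecewise isomorphic. The whole content of the corollary will then follow once one records the general principle that piecewise isomorphic $k$-varieties define the same class in the Grothendieck ring $\KZ$, together with the fact that a motivic measure is a ring morphism.

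First I would isolate two elementary facts about $\KZ$. On the one hand, applying the scissor relation to the closed immersion $Z_{\mathrm{red}}\hookrightarrow Z$ gives $[Z]=[Z_{\mathrm{red}}]+[Z\setminus Z_{\mathrm{red}}]=[Z_{\mathrm{red}}]$, since the open complement is empty; thus the class of a $k$-scheme of finite type depends only on its reduced structure. On the other hand, iterating the scissor relation along a stratification shows that for any finite partition of a variety $W$ into locally closed subschemes $(W_i)_{i\in I}$ one has $[W]=\sum_{i\in I}[W_i]$.

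Next I would combine these facts with the very definition of a piecewise isomorphism. Writing the piecewise isomorphism between $(X\setminus U)_{\mathrm{red}}$ and $(X\setminus V)_{\mathrm{red}}$ as partitions $(V_i)_{i\in I}$ and $(W_i)_{i\in I}$ together with $\mathbb{C}$-isomorphisms $(V_i)_{\mathrm{red}}\cong (W_i)_{\mathrm{red}}$, additivity and invariance under passing to the reduction yield
$$
[(X\setminus U)_{\mathrm{red}}]=\sum_{i\in I}[V_i]=\sum_{i\in I}[(V_i)_{\mathrm{red}}]=\sum_{i\in I}[(W_i)_{\mathrm{red}}]=\sum_{i\in I}[W_i]=[(X\setminus V)_{\mathrm{red}}]
$$
in $\KZ$. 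Such a piecewise isomorphism exists by Theorem \ref{le theoreme}, so this equality of classes holds unconditionally.

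Finally, since a motivic measure $\mu\colon \KZ\rightarrow A$ is by definition a ring morphism, applying $\mu$ to the displayed equality gives $\mu([(X\setminus U)_{\mathrm{red}}])=\mu([(X\setminus V)_{\mathrm{red}}])$. The ``in particular'' clause then follows by specializing $\mu$ to the concrete motivic measures recalled in Section \ref{realisation de Hodge-Deligne} and in the Larsen--Lunts paragraph: the Hodge--Deligne polynomial, the Poincar\'e polynomial, the Euler characteristic with compact support $\chi_c$, and the Larsen--Lunts morphism $\mathrm{SB}$ are all ring morphisms out of $\KZ$, so each of them takes the same value on the two classes. I do not expect any genuine obstacle here, as the heavy lifting is done by the Main Theorem; the only point requiring care is the passage from piecewise isomorphism to equality of classes, which is exactly the elementary computation recorded above.
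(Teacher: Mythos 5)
Your proof is correct and takes essentially the same route as the paper: invoke Theorem \ref{le theoreme}, conclude $[(X\setminus U)_{\mathrm{red}}]=[(X\setminus V)_{\mathrm{red}}]$ in $\KZ$, and apply the ring morphism $\mu$, then specialize to the Hodge--Deligne, Poincar\'e, Euler characteristic and $\mathrm{SB}$ measures. The only difference is that you spell out the elementary step (a piecewise isomorphism forces equality of classes, via the scissor relations and invariance under reduction), which the paper uses implicitly when passing from the Main Theorem to the equality in $\KZ$.
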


\begin{proof}
By Theorem \ref{le theoreme}, $[(X\setminus U)_{\mathrm{red}}]=[(X\setminus V)_{\mathrm{red}}]$ in $K_0(Var_\mathbb{C})$. So, by the definition of a motivic measure, $\mu([(X\setminus U)_{\mathrm{red}}])=\mu([(X\setminus V)_{\mathrm{red}}])$. Recall that the Hodge--Deligne realization morphism, the Poincar\'e realization morphism, the (topological) Euler characteristic, the morphism $\mathrm{SB}$ are examples of motivic measure (see \S \ref{realisation de Hodge-Deligne}).
\end{proof}

\section{Examples}
\label{EXPLE}

One particularly rich source of examples for Theorem \ref{le theoreme} is when we consider $X$ equal to the projective space. We give below three explicit examples in this setting. 
The first one is in dimension 2, and illustrates the fact that the exceptional loci of a transformation and its inverse might be quite far from being isomorphic, despite the fact that they are piecewise isomorphic.

The two last examples are in dimension 3, and as such are direct illustrations of Theorem \ref{le theoreme}. 

\begin{exam}[see {\cite[Example 2.1.14]{AC}}]
Consider the plane Cremona map $f\colon \mathbb{P}^2_\mathbb{C}\dasharrow \mathbb{P}^2_\mathbb{C}$ defined by the datum of the three following homogeneous polynomials, expressed in homogeneous coordinates $x,y,z$:
\begin{align*}
F&= (x^3-yz(x+y))(x^2-yz)(x+y),\\
G&=x^2(x^2-yz)(x+y)^2,\\
H&=x^3(x^3-yz(x+y)).
\end{align*}
The determinant of the Jacobian matrix $\begin{pmatrix} \partial_x F & \partial_y F & \partial_z F \\ \partial_x G & \partial_y G & \partial_z G \\ \partial_x H &\partial_y H & \partial_z H \end{pmatrix}$ is equal to
$$ \text{Jac}(f) = -6x^3y^2z^2(x+y)^3(x+z)^3(y-z)^2.$$
We deduce the (reduced) equation of the contracted curves:
$$ xyz(x+y)(x+z)(y-z) = 0.$$
We see that the exceptional locus is equal to the disjoint union of
\begin{itemize}
\item Seven points $[0:0:1]$, $[0:1:0]$, $[1:0:0]$, $[-1:1:0]$, $[-1:1:1]$, $[0:1:1]$, $[-1:0:1]$;
\item Six curves isomorphic to $ \mathbb{P}^1_\mathbb{C}$ minus three points.
\end{itemize}
The inverse of $f$ has the form
\begin{multline*}
f^{-1} \colon [x:y:z] \dashrightarrow \\
[-xyz(x+y)(x+z)(y-z):-x^2y(x+z)(y-z)^2:z^2(y+x)^2(x^2-yz)]
\end{multline*}
with Jacobian determinant
$$ \text{Jac}(f^{-1}) = 6x^4y(x+y)^2(x^2-yz)(x^3-y^2z-xyz)(x^2y+2x^3-y^2z-xyz).$$
So the exceptional locus of $f^{-1}$ has six irreducible components described as follows with their reduced structures: three lines, a conic and two nodal cubics.
We can write this locus as the disjoint union of
\begin{itemize}
\item Three points $[0:0:1]$, $[0:1:0]$, and $[-1:1:1]$;
\item One curve isomorphic to $\mathbb{P}^1_\mathbb{C}$ minus one point (line $y =0$), two curves isomorphic to $\mathbb{P}^1_\mathbb{C}$ minus two points (corresponding to the two other lines), and three other curves isomorphic to $\mathbb{P}^1_\mathbb{C}$ minus three points (corresponding to the conic and the nodal cubics).
\end{itemize}
We see that we can realize a piecewise isomorphism between the exceptional locus of $f$ and $f^{-1}$ by choosing an arbitrary additional point on each one of the lines $x = 0$ and $x+y = 0$, and two arbitrary additional points on the line $y = 0$. 
\end{exam}

\begin{exam}
We give now an example in dimension 3. Consider a general transformation of $\mathbb{P}^3_\mathbb{C}$ of bidegree $(2,4)$, meaning that $f$ has degree 2 and $f^{-1}$ has degree 4: see \cite{PRV}, where this transformation is denoted by gen$^{[4]}$.
One representant (up to left and right composition by linear automorphisms) is for instance 
$$f\colon [x:y:z:t] \dashrightarrow [yz: xz: xy: t(x+y+z)]$$
with inverse
$$f^{-1} \colon  [x:y:z:t] \dashrightarrow [(yz + xz + xy)yz: (yz + xz + xy)xz: (yz + xz + xy)xy: xyzt].$$ 
We compute the Jacobian determinants of $f$ and $f^{-1}$:
\begin{align*}
\text{Jac}(f)  &= xyz(x+y+z); \\
\text{Jac}(f^{-1})  &= x^2y^2z^2(yz + xz + xy)^3.
\end{align*}
The exceptional set of $f$ is equal to the disjoint union of:
\begin{itemize}
\item The point $p = [0:0:0:1]$;
\item Six curves isomorphic to $\mathbb{P}^1_\mathbb{C} \setminus \{p \}$;
\item Four surfaces isomorphic to $\mathbb{P}^2_\mathbb{C}$ minus three lines with a common point. 
\end{itemize}
On the other hand the exceptional set of $f^{-1}$ is equal to the disjoint union of:
\begin{itemize}
\item The point $p = [0:0:0:1]$;
\item Three curves isomorphic to $\mathbb{P}^1_\mathbb{C} \setminus \{p \}$;
\item Three surfaces isomorphic to $\mathbb{P}^2_\mathbb{C}$ minus two lines;
\item A surface isomorphic to a quadratic cone minus three rules, which is also isomorphic to $\mathbb{P}^2_\mathbb{C}$ minus three lines with a common point.
\end{itemize}
By choosing a general line passing through $p$ in each plane $x=0$, $y=0$, $z=0$, we can recover a stratification of $\text{Exc}(f^{-1})$ which realizes a piecewise isomorphism with  $\text{Exc}(f)$.  
\end{exam}

\begin{exam}\label{exple:cubic}
The following example is probably the simplest birational self-map $g$ of $\cpt$ such that 
\begin{enumerate}[(a)]
\item $\Exc(g)$ and $\Exc(g^{-1})$ are not isomorphic;
\item $\Exc(g)$ contains an irreducible component which is not a rational surface.
\end{enumerate}

Consider the smooth plane cubic $C \subset \cpd$ with homogeneous equation
$$q_3 (x,y,z) := y^2z - x^3 +xz^2 = 0.$$
Then 
$$f\colon (x,y,z) \dashrightarrow \left(\frac{xq_3}{x^3}, \frac{yq_3}{x^3},\frac{zq_3}{x^3} \right)$$
is a birational transformation of $\cpt$ (expressed in an affine chart $\C^3$), with inverse
$$f^{-1}\colon (x,y,z) \dashrightarrow \left( \frac{x^4}{q_3}, \frac{yx^3}{q_3}, \frac{zx^3}{q_3} \right).$$ 

Note that we could construct a similar example starting from any irreducible plane curve $C$ of arbitrary degree (see \cite{Pan}). The associated transformation contracts a surface birational to $C \times \cpo$: We see that transformations contracting only rational surfaces are far from being the rule.

The transformations $f$ and $f^{-1}$ have same exceptional loci, so we now compose $f$ with the cubo-cubic involution
$$\sigma\colon (x,y,z) \dashrightarrow \left( \frac{1}{x}, \frac{1}{y}, \frac{1}{z}\right)$$
to get a more interesting example.
We set $g =  \sigma \circ f$, so $g^{-1} = f^{-1} \circ \sigma$, and explicitly
\begin{align*}
g &\colon (x,y,z) \dashrightarrow \left(\frac{x^3}{xq_3}, \frac{x^3}{yq_3},\frac{x^3}{zq_3} \right), \\
g^{-1} &\colon (x,y,z) \dashrightarrow \left(\frac{y^2z^2}{xq_4},\frac{xyz^2}{xq_4},\frac{xy^2z}{xq_4} \right),
\end{align*}
where $q_4 (x,y,z) := x^3z-y^2z^2+x^2y^2$ satisfies
$q_3\left( \frac1x, \frac1y, \frac1z \right) = \dfrac{q_4(x,y,z)}{x^3y^2z^2}.$

In homogeneous coordinates we have
\begin{align*}
g &\colon [x:y:z:t] \dashrightarrow [x^2yzt:x^3zt:x^3yt:yzq_3], \\
g^{-1} &\colon [x:y:z:t] \dashrightarrow [y^2z^2t: xyz^2t: xy^2zt: xq_4].
\end{align*}

We compute the Jacobian determinants:
\begin{align*}
\text{Jac}(g) &= -5x^7y^2z^2t^2q_3; \\
\text{Jac}(g^{-1}) &= -5x^2y^4z^4t^2q_4.
\end{align*}

The exceptional locus of $g$ is the union of the 4 coordinates planes and of the cone over the smooth elliptic curve $\{t = q_3(x,y,z) = 0\}$, isomorphic to $C$.
Similarly, the exceptional locus of $g$ is the union of the 4 coordinates planes and of the cone over the singular quartic curve $C' = \{t = q_4(x,y,z) = 0\}$.

\begin{table}[t]
$$\begin{array}{l|l|l}
&\Exc(g)&\Exc(g^{-1}) \\ \hline
\text{Points} & \bullet 6 \text{ points } p_0, \dots, p_5  & \bullet 4 \text{ points } p_0, \dots, p_3 \\ \hline
\text{Curves} & \bullet 7 \text{ components} \cong \C^*: & \bullet 6 \text{ components} \cong \C^*: \\ 
&\text{lines } (p_0, p_i), i = 1, \dots 5, & \text{Lines } (p_0, p_i), i = 1, \dots 3,   \\ 
&\text{and} (p_1,p_2), (p_2,p_3).  & \text{and} (p_1,p_2), (p_2,p_3), (p_1,p_3).   \\ 
&\bullet 1 \text{ component } \cong \cpo \setminus 4 \text{ points}: &   \\ 
&\text{line } (p_1, p_3) \text{ minus } \{ p_1, p_3, p_4, p_5\} & \bullet 1 \text{ component } \cong C \setminus 4 \text{ points}: \\ 
&\bullet 1 \text{ component } \cong C \setminus 4 \text{ points}: & q_4 = 0 \text{ minus } \{ p_1, p_2, p_3\}\\ 
&q_3 = 0 \text{ minus } \{ p_2, p_3, p_4, p_5\} & \\ \hline
\text{Surfaces} &\bullet 2 \text{ components } \cong  \mathbb{A}^2_\C \text{ minus } 2 \text{ lines}: &  \bullet 3 \text{ components } \cong \mathbb{A}^2_\C  \text{ minus } 2 \text{ lines}:\\
 & \text{come from } x = 0 , z = 0. & \text{come from } x = 0 , y = 0, z = 0. \\
 & \bullet 1 \text{ component } \cong \mathbb{A}^2_\C \text{ minus } 4 \text{ lines}: & \\
& \text{comes from } y = 0. & \\
& \bullet 1 \text{ component } \cong (\C^*)^2 \setminus \{ q_3 = 0\}: & \bullet 1 \text{ component } \cong (\C^*)^2 \setminus \{ q_4 = 0\}: \\
& \text{comes from } t = 0. & \text{comes from } t = 0. \\
 & \bullet \text{Cone over } C \setminus \{ 4 \text{ points}\} & \bullet \text{Cone over } C' \setminus \{ 3 \text{ points}\} \\ \hline 
\end{array} $$
\caption{Stratification of exceptional loci in Example \ref{exple:cubic}.}
\label{table}
\end{table}

We set
\begin{align*}
p_0 &= [0:0:0:1], & p_1 &= [0:0:1:0], \\
p_2 &= [0:1:0:0], & p_3 &= [1:0:0:0], \\
p_4 &= [1:0:1:0], & p_5 &= [-1:0:1:0],
\end{align*}
and we note that in the plane $t = 0$, the singular locus of the curve $xyzq_3 = 0$ is equal to the five points $\{p_1, \dots, p_5\}$, and the  singular locus of $xyzq_4 = 0$ is equal to the three points $\{p_1, p_2, p_3\}$.

Observe also that there exists a birational morphism $\pi\colon C \to C'$, since $C$ is isomorphic to the desingularisation of $C'$, and $\pi$ induces an isomorphism between $C' \setminus \{p_1, p_2,p_3\} $ and $C$ minus four points. Indeed $p_2$ is an ordinary double point for $C'$, hence $\pi^{-1}(p_2)$ is equal to two points, and $p_1, p_3$ are respectively a smooth and a cuspidal double point for $C'$ hence their preimage by $\pi$ both consist of one single point.

We have natural stratifications of the exceptional loci, as detailled in Table \ref{table}.

Note that by construction the quadratic involution $[x:y:z] \dashrightarrow [yz:xz:xy]$ realizes an isomorphism between $(\C^*)^2 \setminus \{ q_3 = 0\}$ and $(\C^*)^2 \setminus \{ q_4 = 0\}$. 
We can realize a piecewise isomorphism by picking two well-chosen new rules in $\{y = 0\} \subset \text{Exc}(g^{-1})$, and at most four well-chosen new rules in both cones over the elliptic curve in $\text{Exc}(g)$ and $\text{Exc}(g^{-1})$.

\end{exam}

\section{Further comments}
\label{COM}

In this section, we assume for simplicity that $k$ is an algebraically closed field of characteristic zero. As we have shown in the introduction, Conjecture \ref{la conjecture} is linked to a stronger problem: Question \ref{Larsen-Lunts} (see \cite[Question 1.2]{Larsen-Lunts}).

The strategy of resolution of Question \ref{Larsen-Lunts} in \cite{Liu-Sebag} is stopped by different problems of cancellation (geometric and motivic). Besides, some preexisting geometric problems of cancellation can be formulated, or interpreted via Question \ref{Larsen-Lunts}, in the Grothendieck ring of varieties.

We list here some problems related to these cancellation questions.

\subsection{Zero divisor} Is $\mathbb{L}$ a zero divisor in $\KZ$ ? That question corresponds to a ``piecewise'' cancellation problem. Let $\mathfrak{a}\in \KZ$ such that $\mathbb{L}\cdot\mathfrak{a}=0$. One can represent the element $\mathfrak{a}$ as a formal some of classes of the following type
$$
\mathfrak{a}=\sum_{i=1}^m[X_i]-\sum_{i=1}^n[Y_i],
$$
where all the $X_i,Y_i$ are smooth irreducible projective $k$-varieties. Now, if $\mathbb{L}$ is not a zero divisor in $\KZ$ then $\mathfrak{a}=0$. Thus we are studying the validity of the following formula
$$
\mathbb{L}\cdot\left([\sqcup_{i=1}^m X_i]\right)=\mathbb{L}\cdot\left([\sqcup_{i=1}^n Y_i]\right)~~\Rightarrow ~~[\sqcup_{i=1}^m X_i]=[\sqcup_{i=1}^n Y_i].
$$

Note that, if $\mathbb{L}$ is a regular element of $\KZ$, we deduce formally that Question \ref{Larsen-Lunts} admits a positive answer for any pair of $k$-varieties of dimension 2 (see \cite{Sebag-PAMS}). In that case, Conjecture \ref{la conjecture} would be true for any $k$-varieties of dimension 3.

\subsection{Rationality} Is it possible to characterize rationality in $\KZ$? That question admits a positive answer in dimension at most 2, by \cite[Proposition 7,Lemma 12]{Liu-Sebag}. In particular, is it possible that a stably rational (or unirational, i.e., dominated by $\mathbb{P}^{\mathrm{dim}(X)}_k$), non-rational, smooth proper $k$-variety $X$, of dimension at least 3, has the same class in $\KZ$ than a rational smooth proper $k$-variety (of the same dimension)? \\

\if false
In \cite{Liu-Sebag}, the second author has obtained  elements of answer to Question \ref{Larsen-Lunts} thanks to a birational cancellation lemma for any pair of non-uniruled integral $k$-varieties. But this kind of strategy is stopped exactly because of the existence of stably rational, non-rational, smooth projective $k$-varieties (whenever its dimension is at least 3, see \cite{Sansuc}). \\
\fi

\subsection{Cancellation} In affine geometry, Zariski's Cancellation Problem can be formulated as follows. If $X,Y$ is a pair of affine $k$-varieties such that $X\times_k \mathbb{A}^1_k,Y\times_k \mathbb{A}^1_k$ are $k$-isomorphic, is it true that $X,Y$ are $k$-isomorphic? One knows that some further assumptions are necessary to hope for a positive answer: indeed there exist counterexamples (e.g., Danielewski's counterexample) formed by a couple $X,Y$ of non-isomorphic affine $k$-varieties such that $X\times_k \mathbb{A}^1_k,Y\times_k \mathbb{A}^1_k$ are $k$-isomorphic.\\

We can formulate the question in $\KZ$. We are interested in couples of affine $k$-varieties $X,Y$ such that $\mathbb{L}[X]=\mathbb{L}[Y]$. If $\mathbb{L}$ is a regular element of $\KZ$, we conclude that any such a couple verifies $[X]=[Y]$. If Question \ref{Larsen-Lunts} admits a positive answer, then $X,Y$ must be piecewise isomorphic. Up to our knowledge, and up to now, all the counterexamples to Zariski's Cancellation Problem (in affine geometry) do not contradict this conclusion. Moreover, it follows from \cite[Theorem 2, Proposition 6]{Liu-Sebag}, that any counterexample which does not verify that piecewise condition must be of dimension at most 3.\\

So it seems to us interesting to study the following question (see \cite{Sebag-PAMS}). Does there exist a couple of integral affine $k$-varieties $X,Y$, of dimension $d\geq 3$, such that $X\times_k \mathbb{A}^1_k,Y\times_k \mathbb{A}^1_k$ are $k$-isomorphic, and $X,Y$ not birationally equivalent?

\bibliographystyle{abbrv}
\bibliography{biblio}

\end{document}